\newtheorem{ques}{Question}[section]
\newtheorem{theo}{Theorem}[section]
\newtheorem{lem}{Lemma}[section]
\newtheorem{conj}{Conjecture}[section]
\newtheorem{defi}{Definition}[section]
\newcommand{\ol}{\overline}
\newcommand{\be}{\begin{equation}}
\newcommand{\ee}{\end{equation}}
\newcommand{\beas}{\begin{eqnarray*}}
\newcommand{\eeas}{\end{eqnarray*}}
\newcommand{\bea}{\begin{eqnarray}}
\newcommand{\eea}{\end{eqnarray}}
\numberwithin{equation}{section}
\begin{document}

\title[S\MakeLowercase{harp Bounds and Related Problems for Functions in the Classes} $\mathcal{S}_e^{\ast}$ \MakeLowercase{and} $\mathcal{C}_e$]
{O\MakeLowercase{n} C\MakeLowercase{oefficient Problems for classes} $\mathcal{S}_e^{\ast}$ \MakeLowercase{and} $\mathcal{C}_e$.}

\date{}
\author[S. M\MakeLowercase{ajumder}, N. S\MakeLowercase{arkar}, \MakeLowercase{and} M. B. A\MakeLowercase{hamed}]{S\MakeLowercase{ujoy} M\MakeLowercase{ajumder}, N\MakeLowercase{abadwip} S\MakeLowercase{arkar}, M\MakeLowercase{olla} B\MakeLowercase{asir} A\MakeLowercase{hamed}$^*$}
\address{Department of Mathematics, Raiganj University, Raiganj, West Bengal-733134, India.}
\email{sm05math@gmail.com}

\address{Department of Mathematics, Raiganj University, Raiganj, West Bengal-733134, India.}
\email{naba.iitbmath@gmail.com}

\address{Molla Basir Ahamed,
	Department of Mathematics,
	Jadavpur University,
	Kolkata-700032, West Bengal, India.}
\email{mbahamed.math@jadavpuruniversity.in}

\renewcommand{\thefootnote}{}
\footnote{2020 \emph{Mathematics Subject Classification}:30C45, 30C50, 30C55}
\footnote{\emph{Key words and phrases}: Univalent functions, Logarithmic coefficients, Coefficient difference, Hermitian-Toeplitz determinant, Zalcman conjecture, Fekete--Szeg\"{o} inequality}
\footnote{*\emph{Corresponding Author}: Molla Basir Ahamed.}

\renewcommand{\thefootnote}{\arabic{footnote}}
\setcounter{footnote}{0}

\begin{abstract}
Logarithmic coefficients play a crucial role in the theory of univalent functions. In this study,we focus on the classes $\mathcal{S}_e^\ast$ and $\mathcal{C}_e$ of starlike and convex functions, respectively, 

\begin{align*}
	\mathcal{S}_e^\ast := \left\{ f \in \mathcal{S} : \frac{zf'(z)}{f(z)} \prec e^z, \ z \in \mathbb{D} \right\},
\end{align*}
and
\begin{align*}
	\mathcal{C}_e := \left\{ f \in \mathcal{S} : 1 + \frac{z f''(z)}{f'(z)} \prec e^z, \ z \in \mathbb{D} \right\}.
\end{align*}
 This paper investigates the sharp bounds of the logarithmic coefficients and the Hermitian-Toeplitz determinant of these coefficients for the classes $\mathcal{S}_e^\ast$ and $\mathcal{C}_e$. Additionally, we examine the generalized Zalcman conjecture and the generalized Fekete-Szeg\"o inequality for these classes $\mathcal{S}_e^\ast$ and $\mathcal{C}_e$ and show that the inequalities are sharp.
\end{abstract}

\maketitle

\section{{\bf Introduction}}
Let $\mathcal{H}$ denote the class of analytic functions in the unit disk $\mathbb{D}:= \{ z \in \mathbb{C} : |z| < 1 \}$. Here $\mathcal{H}$ is a locally convex topological vector space endowed with the topology of uniform convergence over compact subsets of $\mathbb{D}$. Let  $\mathcal{A}$ denote the class of functions $f\in\mathcal{H}$ normalized by $f(0)=0=f'(0)-1$, and 
 $\mathcal{S}$ denote the class of functions $f\in \mathcal{A}$ which are univalent (i.e. one-to-one) in $\mathbb{D}$. Thus $f\in\mathcal{S}$ has the following representation
\begin{equation}\label{eq1}
f(z) = z + \sum_{n=2}^\infty a_n z^n.
\end{equation}
A function $f\in\mathcal{A}$ is called starlike (convex, receptively) if $f(\mathbb{D})$ is starlike with respect to the origin (convex, respectively). Denote by $\mathcal{S}^*$ and $\mathcal{S}$ the classes of starlike and convex functions in $\mathcal{S}$ respectively. It is well-known that a function $f\in\mathcal{A}$ belongs to $\mathcal{S}^*$ if, and only if, ${\rm Re}(zf'(z)/f(z))>0$ for $z\in\mathbb{D}$. Similarly, a function $f\in\mathcal{A}$ belongs to $\mathcal{C}$ if, and only if, ${\rm Re}(1+zf''(z)/f'(z))>0$ for $z\in\mathbb{D}$. from the above it is easy to see that $f\in\mathcal{C}$ if, and only if, $zf'\in\mathcal{S}^*$.\vspace{1.2mm}

Let $\mathbb{B}_0$ denote the class of analytic functions $\omega$ in $\mathbb{D}$ with $\omega(0)=0$ and $|\omega(z)|<1$ for all $z \in \mathbb{D}$. Functions in $\mathbb{B}_0$ are known as \emph{Schwarz functions}.  
A function $\omega \in \Omega$ can be expressed as a power series $\omega(z) = \sum_{n=1}^\infty \omega_n z^n\; \mbox{for}\;z \in \mathbb{D}.$\vspace{1.2mm}

We now recall an important concept: subordination, which is a useful tool for solving challenging problems in geometric function theory.
\begin{defi}
	For two analytic functions $f$ and $g$ in a domain $\mathbb{D}$, we say that $f$ is \emph{subordinate} to $g$ in $\mathbb{D}$, and write $f \prec g$, if there exists a Schwarz function $\omega \in \Omega$ such that $f(z) = g(\omega(z)),\;z \in \mathbb{D}.$
	In particular, if $g$ is univalent in $\mathbb{D}$, then $f \prec g$ if and only if $f(0)=g(0)$ and $f(\mathbb{D}) \subset g(\mathbb{D})$.
\end{defi}
Using the subordination principle, Ma and Minda \cite{MM} introduced a unified framework for various subclasses of starlike functions in $1992$. They defined
\begin{align*}
	\mathcal{S}^*(\psi) := \left\{ f \in \mathcal{S} : \frac{zf'(z)}{f(z)} \prec \psi(z), \ z \in \mathbb{D} \right\},
\end{align*}
and
\begin{align*}
	\mathcal{C}(\psi) := \left\{ f \in \mathcal{S} : 1 + \frac{z f''(z)}{f'(z)} \prec \psi(z), \ z \in \mathbb{D} \right\},
\end{align*}
where $\psi$ is an analytic univalent function with positive real part in $\mathbb{D}$, symmetric with respect to the real axis, $\psi(0)=1$, and $\psi'(0)>0$.

Interest has grown in studying subclasses of starlike and convex functions for which the superordinate function $\psi(z)$ does not map the entire right half-plane. Although the exponential function is a natural choice for the superordinate function, its selection presents interesting and often non-trivial challenges. 

The class of starlike functions related to the exponential function $e^z$, $\mathcal{S}_e^\ast$, was introduced by Mendiratta \cite{Mendiratta2015} and is defined by the condition $\frac{zf'(z)}{f(z)} \prec e^z$. We also recall the related class $\mathcal{C}_e$ og convex functions related to the exponential function, defined by $1 + \frac{z f''(z)}{f'(z)} \prec e^z$. Precisely, the classes  $\mathcal{S}_e^\ast$ and $\mathcal{C}_e$ are defined as
\begin{align*}
	\mathcal{S}_e^\ast &:= \left\{ f \in \mathcal{S} : \frac{zf'(z)}{f(z)} \prec e^z, \ z \in \mathbb{D} \right\},
\end{align*}\begin{align*}
	\;\;\;\;\;\;\mathcal{C}_e:= \left\{ f \in \mathcal{S} : 1 + \frac{z f''(z)}{f'(z)} \prec e^z, \ z \in \mathbb{D} \right\}.
\end{align*}
\subsection{Logarithmic coefficients}
Note that for $f\in\mathcal{S}$, let  
\begin{align}
	\label{e3} F_f(z):= \log\frac{f(z)}{z} = 2 \sum_{n=1}^\infty \gamma_n z^n, \quad z \in \mathbb{D},\quad \log 1:=0.
\end{align}
The numbers $\gamma_n := \gamma_n(f)$ are the logarithmic coefficients of $f$. Few exact upper bounds for $\gamma_n$ exist. These coefficients are known to play a crucial role in the Miliin conjecture (\cite{IMM1}, see also \cite[p. 155]{PLD1}). Specifically, Miliin \cite{IMM1} conjectured that for $f\in\mathcal{S}$ and $n\geq 2$, 
\begin{align*}
	\sum\limits_{m=1}^n \sum\limits_{k=1}^m \left(k|\gamma_k|^2 - \frac{1}{k}\right) \leq 0.
\end{align*}
This conjecture was established by De Branges \cite{LDB1} in his proof of the Bieberbach conjecture. For the Koebe function $k(z) = \frac{z}{(1-z)^2}$, the logarithmic coefficients are given by $\gamma_n = \frac{1}{n}$. Since the Koebe function is the extremal function for many extremal problems in $\mathcal{S}$, it is natural to conjecture that $|\gamma_n| \leq \frac{1}{n}$ for all $f \in \mathcal{S}$. However, this conjecture does not hold universally. For example, there exists a bounded function $f \in \mathcal{S}$ with logarithmic coefficients $\gamma_n \neq O(n^{-0.83})$ (see \cite[Theorem 8.4]{PLD1}).\vspace{1.2mm}
 
By differentiating \eqref{e3} and comparing coefficients, the following expressions for $\gamma_n$ in terms of $a_n$ are obtained:  

\begin{align}\label{Eq-1.3}
	\begin{cases}
		\gamma_1 & = \dfrac{1}{2} a_2,\vspace{2mm} \\
		\gamma_2 & = \dfrac{1}{2}\left(a_3 - \dfrac{1}{2}a_2^2\right),\vspace{2mm} \\
		\gamma_3 & = \dfrac{1}{2}\left(a_4 - a_2a_3 + \dfrac{1}{3}a_2^3\right)\vspace{2mm}\\
		\gamma_4 &=\dfrac{1}{2}\big(a_5 - a_2 a_4 + a_2^2 a_3 - \dfrac{1}{2} a_3^2 - \dfrac{1}{4} a_2^4\big).
	\end{cases}
\end{align}
If $f \in \mathcal{S}$, it is straightforward to show that $|\gamma_1| \leq 1$, since $|a_2| \leq 2$. Using the Fekete-Szeg\"o inequality (see \cite[Theorem 3.8]{PLD1}) for functions in $\mathcal{S}$ and substituting into (\ref{e3}), the sharp estimate for $\gamma_2$ is given by 
\begin{align*}
	\left|\gamma_2\right| \leq \frac{1}{2}(1 + 2e^{-2}) \approx 0.635.
\end{align*}
For $n \geq 3$, deriving bounds for $|\gamma_n|$ is considerably more challenging, and no significant general bounds for $|\gamma_n|$ for functions in $\mathcal{S}$ are currently known. Logarithmic coefficients have recently been a focus of research interest for various authors (e.g., \cite{AA1,AA2,CKKLS2,PS0,PSW1,RK1,DT1}).\vspace{1.2mm}

In this article, we investigate various coefficient problems and determine their sharp bounds for several topics in geometric function theory, specifically focusing on the logarithmic coefficients, Hermitian-Toeplitz determinant, generalized Zalcman conjecture, and the generalized Fekete-Szeg\"o inequality. The remainder of the paper is organized as follows: Section \ref{Sec-2} introduces the necessary lemmas required to establish our main findings. Section \ref{Sec-3} establishes sharp bounds for the logarithmic coefficients of the classes $\mathcal{S}_e^{\ast}$ and $\mathcal{C}_e$. Section \ref{Sec-4} presents sharp bounds of the Second-order Hermitian-Toeplitz determinant of logarithmic coefficients for the classes $\mathcal{S}_e^{\ast}$ and $\mathcal{C}_e$. In Section \ref{Sec-5}, the generalized Zalcman conjecture for the classes $\mathcal{S}_e^{\ast}$ and $\mathcal{C}_e$ is discussed. Finally, Section \ref{Sec-6} establishes sharp bounds of the generalized Fekete-Szeg\"o functional for the classes $\mathcal{S}_e^{\ast}$ and $\mathcal{C}_e$. The proofs of the main results are discussed in detail in each respective section.

\section{{\bf Auxulary Lemmas}}\label{Sec-2}
Let $\mathcal{P}$ be the class of all analytic functions $p$ in the unit disk $\mathbb{D}$ such that $p(0) = 1$ and $\operatorname{Re} p(z) > 0$ for all $z \in \mathbb{D}$. Every $p \in \mathcal{P}$ then has the series representation
\begin{equation}\label{p1}
	p(z) = 1 + \sum_{n=1}^{\infty} c_n z^n, \quad z \in \mathbb{D}.
\end{equation}

Functions in $\mathcal{P}$ are referred to as Carath$\acute{e}$odory functions. It is well-known that for $p \in \mathcal{P}$, the coefficients satisfy the sharp bound $|c_n| \leq 2$ for all $n \geq 1$ (see \cite{PLD1}). The CarathM$\acute{e}$odory class $\mathcal{P}$ and its coefficient bounds play a fundamental role in deriving sharp estimates in geometric function theory.\vspace{2mm}

Now we recall the following well-known results due to Cho et al. \cite{CKL1}, which will play a key role in establishing the main results of this paper.
\begin{lem}\label{L1} \cite[Lemma 2.4]{CKL1} If $p\in\mathcal{P}$ is of the form (\ref{p1}), then
\bea
\label{L1}c_1 =2\tau_1,\eea
\bea\label{L2} c_2=2\tau_1^2 + 2(1 - \tau_1^2)\tau_2\eea
and
\bea
\label{L3} c_3 = 2\tau_1^3+4(1-\tau_1^2)\tau_1\tau_2 - 2(1 - \tau_1^2)\tau_1\tau_2^2 + 2(1 - \tau_1^2)(1 - |\tau_2|^2)\tau_3
\eea
for some $\tau_1, \tau_2, \tau_3 \in\mathbb{\ol D}:= \{z \in \mathbb{C}: |z| \leq 1 \}$.

\medskip
For $\tau_1 \in \mathbb{T}:= \{z \in \mathbb{C}: |z| = 1 \}$, there is a unique function $p\in \mathcal{P}$ with $c_1$  as in (\ref{L1}), namely
\[p(z)=\frac{1+\tau_1 z}{1 - \tau_1 z}, \quad z \in \mathbb{D}.\]

\medskip
For $\tau_1 \in \mathbb{D}$ and $\tau_2 \in \mathbb{T}$, there is a unique function $p\in \mathcal{P}$ with $c_1$ and $c_2$ as in (\ref{L1}) and (\ref{L2}), namely
\[p(z) = \frac{1+(\ol \tau_1 \tau_2+\tau_1)z + \tau_2 z^2}{1+(\ol \tau_1 \tau_2 - \tau_1)z - \tau_2 z^2}, \quad z \in \mathbb{D}.\]

\medskip
For $\tau_1, \tau_2 \in \mathbb{D}$ and $\tau_3 \in \mathbb{T}$, there is a unique function $p\in\mathcal{P}$ with $c_1$, $c_2$ and $c_3$  as in (\ref{L1})-(\ref{L3}), namely
\[p(z)=\frac{1 + (\ol\tau_2 \tau_3 + \ol\tau_1 \tau_2 + \tau_1)z+(\ol\tau_1\tau_3+\tau_1\ol\tau_2\tau_3+\tau_2)z^2+\tau_3z^3}{1+(\ol\tau_2\tau_3+\ol\tau_1\tau_2-\tau_1)z+(\ol\tau_1\tau_3-\tau_1\ol\tau_2\tau_3-\tau_2)z^2-\tau_3z^3},\;\;z\in\mathbb{D}.\]
\end{lem}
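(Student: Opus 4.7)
The plan is to derive this parametrization via the Schur algorithm applied to the Schwarz function associated with $p$. The first step is to pass from $p \in \mathcal{P}$ to the analytic function
\begin{align*}
\omega(z) := \frac{p(z)-1}{p(z)+1},
\end{align*}
which is an analytic self-map of $\mathbb{D}$ vanishing at $0$; conversely $p = (1+\omega)/(1-\omega)$. Expanding the latter as a geometric series and equating coefficients yields the explicit relations
\begin{align*}
c_1 = 2\omega_1, \qquad c_2 = 2\omega_2 + 2\omega_1^2, \qquad c_3 = 2\omega_3 + 4\omega_1\omega_2 + 2\omega_1^3,
\end{align*}
where $\omega(z) = \sum_{n\ge 1}\omega_n z^n$. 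So it suffices to parametrize $\omega_1, \omega_2, \omega_3$.

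Next, I would apply Schur's algorithm to the auxiliary function $g(z) := \omega(z)/z$, which is analytic and $\overline{\mathbb{D}}$-valued. Setting $\tau_1 := g(0)$, one has $|\tau_1|\le 1$ by Schwarz's lemma; when $|\tau_1|<1$, the function
\begin{align*}
g_2(z) := \frac{1}{z}\cdot\frac{g(z)-\tau_1}{1-\overline{\tau_1}\,g(z)}
\end{align*}
is again a Schur function, so one iterates and extracts $\tau_2 := g_2(0)$ and $\tau_3 := g_3(0)$ in $\overline{\mathbb{D}}$. A direct Taylor expansion of the inverse relations then gives $\omega_1 = \tau_1$, $\omega_2 = (1-|\tau_1|^2)\tau_2$, and $\omega_3 = (1-|\tau_1|^2)\bigl[(1-|\tau_2|^2)\tau_3 - \overline{\tau_1}\tau_2^{\,2}\bigr]$, from which the displayed identities \eqref{L1}--\eqref{L3} follow by substitution.

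For the uniqueness statements: if $\tau_1\in\mathbb{T}$, then $|g(0)|=1$ attains the maximum modulus of $g$ on $\mathbb{D}$, so by the maximum modulus principle $g\equiv\tau_1$; hence $\omega(z)=\tau_1 z$ and $p(z)=(1+\tau_1 z)/(1-\tau_1 z)$. If $\tau_1\in\mathbb{D}$ but $\tau_2\in\mathbb{T}$, then $g_2$ is a unimodular constant, and reversing one step of Schur's algorithm identifies $g$ with an explicit Blaschke factor of degree one; substituting the resulting $\omega$ into $p=(1+\omega)/(1-\omega)$ and clearing denominators produces the ratio of quadratic polynomials displayed in the lemma. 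The case $\tau_1,\tau_2\in\mathbb{D}$ and $\tau_3\in\mathbb{T}$ is analogous but one Schur step deeper.

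The main obstacle is the algebraic bookkeeping in this last case: unwinding two successive Schur reversals and grouping the numerator and denominator into the symmetric cubic form displayed in the lemma requires patience, even though each individual step is elementary. A more expedient route is simply to invoke \cite[Lemma 2.4]{CKL1}, where these identities and the associated extremal functions are established in full.
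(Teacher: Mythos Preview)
The paper does not prove this lemma at all; it is quoted verbatim from \cite[Lemma~2.4]{CKL1} and used as a black box. Your outline via the Schur algorithm is exactly the classical derivation that underlies that reference, and the argument you sketch (passing to $\omega=(p-1)/(p+1)$, peeling off Schur parameters from $g=\omega/z$, then back-substituting) is correct and is the standard route.

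One small point worth flagging: your computation yields $\omega_2=(1-|\tau_1|^2)\tau_2$ and $\omega_3=(1-|\tau_1|^2)\bigl[(1-|\tau_2|^2)\tau_3-\overline{\tau_1}\tau_2^{\,2}\bigr]$, hence factors $(1-|\tau_1|^2)$ and $(1-|\tau_2|^2)$ rather than the $(1-\tau_1^2)$ appearing in the lemma as stated here. The two agree only when $\tau_1$ is real, which is the tacit convention in applications (one rotates so that $c_1\ge 0$, forcing $\tau_1\in[0,1]$; this is exactly how the paper uses the lemma later, e.g.\ in the case analysis $\tau_1\in(0,1)$). If you want your derivation to reproduce the displayed formulas literally, you should either note this normalization or observe that the paper's statement is slightly informal on this point.
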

\begin{lem}\label{L2}\cite{CKS1} Let $A$, $B$, $C$  be real numbers and let
\[Y(A, B, C):= \max\limits_{z\in \ol{\mathbb{D}}}\left\lbrace |A+Bz+Cz^2|+1-|z|^2\right\rbrace.\]

\begin{enumerate} 
\item[(i)] If $AC\geq 0$, then
\begin{align*}
	Y(A, B, C) =
	\begin{cases}
		|A|+|B|+|C|, & \text{if}\;\;\; |B|\geq 2(1-|C|), \\
		1+|A|+\frac{B^2}{4(1-|C|)}, &\text{if}\;\;\; |B|<2(1-|C|).
	\end{cases}
\end{align*}
\item[(ii)] If $AC<0$, then 
\begin{align*}
	Y(A,B,C)=
	\begin{cases}
		1-|A|+\frac{B^2}{4(1-|C|)}, &\text{if}\;\;\; -4AC(C^{-2}-1) \leq B^2\; \text{and}\; |B|<2(1-|C|), \\
		1+|A|+\frac{B^2}{4(1+|C|)}, &\text{if}\;\;\; B^2<\min\left\{4(1+|C|)^2, -4AC(C^{-2}-1) \right\}, \\
		R(A,B,C), &\text{otherwise},
	\end{cases}
\end{align*}
where
\[R(A,B,C):=
\begin{cases}
|A|+|B|-|C|, & \text{if}\;\;\; |C|(|B|+4|A|) \leq |AB|, \\
-|A|+|B|+|C|, & \text{if}\;\;\; |AB|\leq |C|(|B|-4|A|), \\
(|C|+|A| )\sqrt{1-\frac{B^2}{4AC}}, &\text{otherwise}.
\end{cases}
\]
\end{enumerate} 
\end{lem}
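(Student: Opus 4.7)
The plan is to reduce the two-variable maximization of $|A + Bz + Cz^2| + 1 - |z|^2$ on $\ol{\mathbb{D}}$ to a family of one-variable problems by exploiting that $A,B,C$ are real. Writing $z = re^{i\theta}$ with $r \in [0,1]$ and using $\cos 2\theta = 2\cos^2\theta - 1$, I would expand
\begin{align*}
|A + Bz + Cz^2|^2 = A^2 + B^2 r^2 + C^2 r^4 - 2ACr^2 + 2Br(A + Cr^2)\,t + 4ACr^2\, t^2,
\end{align*}
where $t := \cos\theta \in [-1,1]$. This is a quadratic in $t$ with leading coefficient $4ACr^2$, so the sign of $AC$ dictates whether the quadratic is convex or concave in $t$, which is precisely the source of the two-case split in the lemma.

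In the case $AC \geq 0$ the quadratic in $t$ is convex, so its maximum over $[-1,1]$ is attained at $t = \pm 1$, i.e.\ on the real diameter. After a sign reduction, the problem collapses to maximizing $h(r) := |A| + |B|r + (|C|-1)r^2 + 1$ over $r \in [0,1]$; in the natural range $|C| < 1$ this is a downward-opening parabola with unconstrained maximizer $r^* = |B|/(2(1-|C|))$. If $r^* \geq 1$, equivalently $|B| \geq 2(1-|C|)$, the boundary value $r=1$ wins and yields $Y = |A| + |B| + |C|$; otherwise $r = r^*$ yields $Y = 1 + |A| + B^2/(4(1-|C|))$, matching the two stated sub-formulas.

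In the case $AC < 0$ the quadratic in $t$ is concave, with unconstrained maximizer $t^*(r) = -B(A + Cr^2)/(4ACr)$. Whether $|t^*(r)| \leq 1$ depends on $r$ and decides which branch of the $t$-maximum applies; the threshold $|t^*(1)| = 1$ rearranges into the inequality $-4AC(C^{-2}-1) \leq B^2$ appearing in the lemma. Within each sub-regime I would substitute the selected $t$ back into the expression and optimize in $r$ by elementary calculus, comparing the resulting interior critical point against the endpoints $r = 0,1$. The three candidate maximizers produce respectively the values $1 - |A| + B^2/(4(1-|C|))$, $1 + |A| + B^2/(4(1+|C|))$, and the piecewise quantity $R(A,B,C)$; the further sub-cases in $R$ emerge from comparing $|AB|$ to $|C|(|B| \pm 4|A|)$ when pitting the interior critical point against the endpoint candidates.

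The main obstacle will be the bookkeeping in the $AC<0$ case: the parameter region must be partitioned into finitely many subregions on each of which a single candidate is globally maximal, and the piecewise definition of $R(A,B,C)$ requires matching square-root expressions to linear thresholds. Verifying that the resulting sub-cases are mutually exclusive, collectively exhaustive, and continuous across their shared boundaries is the genuinely delicate part of the argument; by contrast, the case $AC \geq 0$ is essentially a single-variable calculus exercise once the convexity-in-$t$ reduction is in place.
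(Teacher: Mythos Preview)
The paper does not prove this lemma at all: it is quoted verbatim from Choi--Kim--Sugawa \cite{CKS1} and used as a black box in Section~\ref{Sec-5}. So there is no ``paper's own proof'' to compare against.

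That said, your plan is the natural and correct one, and it is essentially the route taken in the original reference. The key structural observation---that for real $A,B,C$ the squared modulus $|A+Bz+Cz^2|^2$ is a quadratic in $t=\cos\theta$ with leading coefficient $4ACr^2$, so that the sign of $AC$ governs convexity in $t$---is exactly the reason the lemma splits into cases (i) and (ii). Your treatment of case (i) is complete: the identity $\max(|u+v|,|u-v|)=|u|+|v|$ justifies the ``sign reduction'' to $h(r)=1+|A|+|B|r+(|C|-1)r^2$, and the remaining one-variable optimization is straightforward (your caveat about $|C|\geq 1$ is handled automatically since then $2(1-|C|)\leq 0\leq|B|$ forces the first sub-case). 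For case (ii) you have correctly identified the mechanism---the interior $t$-critical point $t^*(r)=-B(A+Cr^2)/(4ACr)$ and the threshold inequality $-4AC(C^{-2}-1)\leq B^2$---but, as you yourself note, the genuine work is the exhaustive partition of the $(A,B,C)$-parameter region and the cross-checking of candidate maxima that produces the piecewise $R(A,B,C)$. That bookkeeping is tedious but contains no further ideas; your outline would carry through.
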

\begin{lem}\label{L3} \cite{MM}
Let $p \in \mathcal{P}$ be given by \eqref{p1}. Then
\[
\left| c_2 - v c_1^2 \right| \le 
\begin{cases}
-4v + 2, & v < 0, \\
2, & 0 \leq v \leq 1, \\
4v - 2, & v > 1.
\end{cases}
\]

Moreover, for $v < 0$ or $v > 1$, equality holds if and only if
\[
h(z) = \frac{1+z}{1-z} \quad \text{or one of its rotations}.
\]

For $0 < v < 1$, equality holds if and only if
\[
h(z) = \frac{1+z^2}{1-z^2} \quad \text{or one of its rotations}.
\]
\end{lem}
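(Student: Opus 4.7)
The plan is to apply the Schur-type parametrization from Lemma \ref{L1}. Writing $c_1 = 2\tau_1$ and $c_2 = 2\tau_1^2 + 2(1-\tau_1^2)\tau_2$ with $\tau_1, \tau_2 \in \overline{\mathbb{D}}$, a direct substitution yields $c_2 - v c_1^2 = 2(1-2v)\tau_1^2 + 2(1-\tau_1^2)\tau_2$. Because the rotation $p(z) \mapsto p(e^{i\theta}z)$ preserves $\mathcal{P}$ and multiplies the functional $c_2 - v c_1^2$ by the unimodular factor $e^{2i\theta}$, there is no loss of generality in assuming $\tau_1 \in [0,1]$. The triangle inequality together with $|\tau_2| \le 1$ then gives $|c_2 - v c_1^2| \le 2|1-2v|\,\tau_1^2 + 2(1-\tau_1^2) = 2 + 2\tau_1^2\,(|1-2v|-1)$.

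Next I would split into three cases according to the sign of $|1-2v|-1$. If $0 \le v \le 1$, then $|1-2v|-1 \le 0$, so the right-hand side is maximized at $\tau_1 = 0$ and the bound equals $2$. If $v < 0$, then $|1-2v| = 1-2v > 1$, so the maximum occurs at $\tau_1^2 = 1$ and gives $2(1-2v) = -4v + 2$. Similarly, if $v > 1$, then $|1-2v| = 2v-1 > 1$, and the maximum at $\tau_1^2 = 1$ gives $4v-2$. This establishes the three-part bound.

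For the equality statements I would invoke the uniqueness parts of Lemma \ref{L1}. In the regimes $v < 0$ or $v > 1$, equality forces $\tau_1^2 = 1$, so that $p(z) = (1+\tau_1 z)/(1-\tau_1 z)$ with $|\tau_1| = 1$, which is a rotation of $(1+z)/(1-z)$. In the intermediate range $0 < v < 1$ the factor $|1-2v|-1$ is strictly negative, so equality in $|c_2 - v c_1^2| = 2$ requires $\tau_1 = 0$ together with $|\tau_2| = 1$, and the second uniqueness clause of Lemma \ref{L1} then pins $p(z) = (1+\tau_2 z^2)/(1-\tau_2 z^2)$, a rotation of $(1+z^2)/(1-z^2)$.

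The argument is almost entirely routine; the only points that require care are removing the absolute value $|1-2v|$ correctly across the three regimes of $v$ and bookkeeping that the extremal values of $\tau_1, \tau_2$ identify uniquely the claimed extremal Carath\'eodory functions via the uniqueness assertions of Lemma \ref{L1}. No deeper structural input beyond that lemma is required.
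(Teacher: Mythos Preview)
The paper does not prove this lemma at all; it is quoted verbatim from Ma--Minda \cite{MM} as an auxiliary tool, so there is no ``paper's own proof'' to compare against. Your argument is nonetheless correct and entirely self-contained: the parametrization of Lemma~\ref{L1} reduces the problem to maximizing the affine function $\tau_1^2 \mapsto 2 + 2\tau_1^2(|1-2v|-1)$ over $[0,1]$, and the three regimes of $v$ fall out immediately from the sign of $|1-2v|-1$; the equality analysis via the uniqueness clauses of Lemma~\ref{L1} is also sound. One small remark: your rotation step is justified because $c_1 = 2\tau_1$ identifies $\tau_1$ with $c_1/2$, so making $c_1\ge 0$ forces $\tau_1\in[0,1]$ and the corresponding $\tau_2$ (recomputed for the rotated function) still lies in $\overline{\mathbb{D}}$ by Lemma~\ref{L1}.
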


\begin{lem}\label{L4}\cite{Ali2003}
Let $p \in \mathcal{P}$ be given by \eqref{p1} with $0 \leq B \leq 1$ and $B(2B - 1) \leq D \leq B$. Then
\[
\left| c_3 - 2 B c_1 c_2 + D c_1^3 \right| \leq 2.
\]
\end{lem}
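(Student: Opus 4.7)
\medskip

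\noindent\textbf{Proof proposal.} My plan is to translate the problem into the parametric language of Lemma~\ref{L1} and then compress the analysis into the extremal quantity $Y(A,B_0,C)$ of Lemma~\ref{L2}. The functional $\Phi(p) := c_3 - 2Bc_1c_2 + Dc_1^3$ is homogeneous of degree three in the coefficients of $p$, so the rotation $p(z)\mapsto p(e^{i\theta}z)$ multiplies $\Phi$ by $e^{3i\theta}$ and leaves $|\Phi|$ invariant; hence I may assume $\tau_1 = t \in [0,1]$. Substituting the formulas \eqref{L1}--\eqref{L3} and collecting terms yields
\begin{equation*}
\Phi = 2(1-4B+4D)t^3 + 4(1-2B)(1-t^2)t\tau_2 - 2(1-t^2)t\tau_2^2 + 2(1-t^2)(1-|\tau_2|^2)\tau_3,
\end{equation*}
with $\tau_2,\tau_3\in\ol{\mathbb{D}}$ free.

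Applying the triangle inequality with $|\tau_3|\leq 1$ gives
\begin{equation*}
|\Phi| \leq 2t\bigl|(1-4B+4D)t^2 + 2(1-2B)(1-t^2)\tau_2 - (1-t^2)\tau_2^2\bigr| + 2(1-t^2)(1-|\tau_2|^2).
\end{equation*}
The endpoints are immediate: at $t=0$ the right-hand side equals $2(1-|\tau_2|^2)\leq 2$, while at $t=1$ it reduces to $2|1-4B+4D|$, which does not exceed $2$ because the hypotheses $0\leq B\leq 1$ and $B(2B-1)\leq D\leq B$ together force $0\leq B-D\leq 2B(1-B)\leq 1/2$, hence $|1-4B+4D|\leq 1$. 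For $t\in(0,1)$, dividing by $2(1-t^2)$ reduces the desired bound $|\Phi|\leq 2$ to
\begin{equation*}
\left|\frac{(1-4B+4D)t^3}{1-t^2} + 2(1-2B)t\tau_2 - t\tau_2^2\right| + \bigl(1-|\tau_2|^2\bigr) \leq \frac{1}{1-t^2},
\end{equation*}
and the supremum of the left-hand side over $\tau_2\in\ol{\mathbb{D}}$ is precisely $Y(A,B_0,C)$, where
\[
A := \frac{(1-4B+4D)t^3}{1-t^2}, \qquad B_0 := 2(1-2B)t, \qquad C := -t.
\]

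The remainder of the proof is a case analysis dictated by Lemma~\ref{L2}. Since $C<0$, the sign of $AC$ is opposite to that of $1-4B+4D$, so both branches (i) ($AC\geq 0$, i.e.\ $1-4B+4D\leq 0$) and (ii) ($AC<0$, i.e.\ $1-4B+4D>0$) arise, depending on the location of $D$ inside the interval $[B(2B-1),B]$. In each of the resulting subcases, the required inequality $Y(A,B_0,C)\leq 1/(1-t^2)$ reduces, after clearing denominators, to a polynomial inequality in the single variable $t\in(0,1)$ whose coefficients depend on $(B,D)$ only through $1-2B$ and $1-4B+4D$. The main technical obstacle is verifying each of these polynomial inequalities under the constraints $0\leq B\leq 1$ and $B(2B-1)\leq D\leq B$; the tightest subcase is the one with $AC\leq 0$ and $|B_0|<2(1-|C|)$, where the boundary value $D=B(2B-1)$ saturates the estimate at a certain $t$, reflecting the sharpness of the constant $2$ in the stated bound. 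Once every subcase has been checked, the inequality $|c_3 - 2Bc_1c_2 + Dc_1^3|\leq 2$ follows.
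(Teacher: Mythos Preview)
The paper does not prove this lemma at all; it is quoted verbatim from Ali (2003) and used as a black box. So there is no in-paper argument to compare against.

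Your reduction is correct as far as it goes: the substitution of the parametrisation from Lemma~\ref{L1}, the rotation to $\tau_1=t\in[0,1]$, the triangle-inequality step on $\tau_3$, and the identification of the remaining supremum with $Y(A,B_0,C)$ for $A=\tfrac{(1-4B+4D)t^3}{1-t^2}$, $B_0=2(1-2B)t$, $C=-t$ are all sound, and your endpoint checks at $t=0,1$ are fine (the inequality $0\le B-D\le 2B(1-B)\le \tfrac12$ is exactly what forces $|1-4B+4D|\le1$). This route is genuinely different from Ali's original argument, which predates the Choi--Kim--Sugawa functional of Lemma~\ref{L2}; Ali works directly with the Schwarz-function coefficient formulas and elementary inequalities rather than packaging the $\tau_2$-optimisation into $Y$.

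That said, your proposal stops precisely where the work begins. Lemma~\ref{L2} splits into five subcases (two under $AC\ge0$, three under $AC<0$ with the further trichotomy inside $R(A,B_0,C)$), and the sign of $AC$ itself depends on whether $1-4B+4D$ is positive, zero, or negative, which varies across the admissible $(B,D)$-region. None of these verifications are written down, and the remark that ``the tightest subcase is the one with $AC\le0$ \ldots where the boundary value $D=B(2B-1)$ saturates the estimate'' is not accurate: at $D=B(2B-1)$ one has $1-4B+4D=8B^2-8B+1$, which is \emph{negative} (hence $AC>0$) for $B$ in the interval $\bigl(\tfrac{2-\sqrt2}{4},\tfrac{2+\sqrt2}{4}\bigr)$, and in any event the sharpness of the bound $2$ is attained at $t=0$ (i.e.\ $c_1=c_2=0$, $c_3=2$), independently of $B$ and $D$. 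So the heuristic you give for locating the difficult case is off. The strategy is viable, but what you have is an outline, not a proof; the polynomial inequalities in each subcase still need to be stated and checked.
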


\begin{lem}\label{L5}\cite{Ravichandran2015}
Let $p \in \mathcal{P}$ be given by \eqref{p1}. If $\alpha, \beta, \gamma, \lambda$ satisfy
\[
0 < \alpha < 1, \quad 0 < \lambda < 1,
\]
and
\begin{align*}
	8&\lambda(1 - \lambda) \Big\{ (\alpha\beta - 2\gamma)^2 + (\alpha(\lambda + \alpha) - \beta)^2 \Big\}
	+ \alpha(1 - \alpha)(\beta - 2\lambda \alpha)^2\\& \leq 4 \alpha^2 (1 - \alpha)^2 \lambda (1 - \lambda),
\end{align*}
then
\[
|\gamma c_1^4 + \lambda c_2^2 + 2 \alpha c_1 c_3 - \frac{3}{2} \beta c_1^2 c_2 - c_4 |\leq 2.
\]
\end{lem}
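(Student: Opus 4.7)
The plan is to reduce the inequality to an optimization over the Schur--Cohn parameters of Lemma~\ref{L1} and then exploit the algebraic hypothesis as a discriminant-type condition. Since the functional
\[
\Phi(p):=\gamma c_1^4+\lambda c_2^2+2\alpha c_1 c_3-\tfrac{3}{2}\beta c_1^2 c_2-c_4
\]
transforms under the rotation $p_\theta(z):=p(e^{i\theta}z)$ by $\Phi(p_\theta)=e^{4i\theta}\Phi(p)$, the modulus $|\Phi(p)|$ is rotation-invariant, so without loss of generality I may assume $c_1=2t$ with $t\in[0,1]$.

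Next I would extend Lemma~\ref{L1} via the standard Carath\'eodory--Schur algorithm to obtain an analogous formula
\[
c_4=2\tau_1^4+(\text{terms in }\tau_1,\tau_2,\tau_3)+2(1-|\tau_1|^2)(1-|\tau_2|^2)(1-|\tau_3|^2)\tau_4,
\]
so that $\Phi$ becomes affine-linear in $\tau_4\in\overline{\mathbb{D}}$. Applying the triangle inequality in $\tau_4$ yields
\[
|\Phi(p)|\le \bigl|\Psi(t,\tau_2,\tau_3)\bigr|+2(1-t^2)(1-|\tau_2|^2)(1-|\tau_3|^2),
\]
where $\Psi$ is an explicit polynomial whose $\tau_3$-dependence is a quadratic $A(t,\tau_2)+B(t,\tau_2)\tau_3+C(t,\tau_2)\tau_3^2$. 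I would then factor out the common $(1-t^2)(1-|\tau_2|^2)$, apply Lemma~\ref{L2} to the function $Y(A,B,C)$ of the remaining inner coefficients, and thereby eliminate $\tau_3$ in favor of an expression depending only on $t$ and $\tau_2$.

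The final step is a two-variable optimization: writing $\tau_2=re^{i\varphi}$, maximizing in $\varphi$ (the coefficients of the polynomial align so that the extremum occurs at $\varphi\in\{0,\pi\}$), and then verifying that the resulting real function of $(t,r)\in[0,1]^2$ does not exceed $2$ exactly when
\[
8\lambda(1-\lambda)\bigl\{(\alpha\beta-2\gamma)^2+(\alpha(\lambda+\alpha)-\beta)^2\bigr\}+\alpha(1-\alpha)(\beta-2\lambda\alpha)^2\le 4\alpha^2(1-\alpha)^2\lambda(1-\lambda)
\]
holds. The hypothesis should surface as the non-positivity of the discriminant of an auxiliary quadratic whose coefficients are precisely the combinations $\alpha\beta-2\gamma$, $\alpha(\lambda+\alpha)-\beta$, and $\beta-2\lambda\alpha$ appearing above.

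The main obstacle is the bookkeeping: organizing the terms in $\Psi$ so that Lemma~\ref{L2} applies cleanly in a single branch of $Y(A,B,C)$, tracking sign conditions so that no branch switch forces a loss in the constant, and finally recombining the four parameters $\alpha,\beta,\gamma,\lambda$ into the exact algebraic form of the hypothesis. A casual use of the triangle inequality at any of these stages will destroy the sharp constant $2$, so the decomposition must be performed with an eye on the extremal Carath\'eodory function (typically $p(z)=(1+z^2)/(1-z^2)$ or one of its rotations), whose coefficients realize equality.
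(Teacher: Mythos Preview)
The paper does not prove this lemma at all: Lemma~\ref{L5} is quoted verbatim from \cite{Ravichandran2015} and used as a black box in the proofs of the bounds on $|\gamma_4|$. There is therefore no proof in the present paper to compare your proposal against.

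That said, your outline is broadly consistent with the method of the original source. Ravichandran and Verma also work via the Carath\'eodory--Schur parametrization (the extended version of Lemma~\ref{L1} including a formula for $c_4$), reduce by rotation to $c_1\ge 0$, and then carry out a nested optimization in which the algebraic hypothesis appears precisely as the discriminant condition you anticipate. One point worth flagging: you suggest handling the $\tau_3$-layer by invoking Lemma~\ref{L2} on a quadratic $A+B\tau_3+C\tau_3^2$, but in the actual computation the $\tau_3$-dependence after separating out $\tau_4$ is only \emph{linear} (the $c_3$ formula contributes a single $\tau_3$, and $c_4$ contributes $\tau_3$ and $\tau_3\tau_4$ terms but no $\tau_3^2$), so Lemma~\ref{L2} is not the right tool at that stage---a direct triangle inequality in $\tau_3$ suffices, and the quadratic structure to which a discriminant argument applies emerges only at the final $(t,|\tau_2|)$ level. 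Apart from that mislabeling of which layer is quadratic, your plan matches the standard proof; the ``bookkeeping'' you mention is genuinely the entire content, and the sharp constant $2$ survives because at each elimination the extremal choice ($|\tau_j|=1$ or $\tau_j=0$ as appropriate) is compatible with an actual $p\in\mathcal{P}$.
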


\begin{lem}\label{Ls}\cite{SimThomas2020}
Let $J, K,$ and $L$ be numbers such that $J \geq 0$, $K \in \mathbb{C}$, and $L \in \mathbb{R}$. 
Let $p \in \mathcal{P}$ be of the form (\ref{p1}) and define a function by

\[
\Phi(c_1,c_2) = \big| K c_1^2 + L c_2 \big| - \big| J c_1 \big|.
\]
Then 
\[
\Phi(c_{1}, c_{2}) \le 
\begin{cases}
|4K + 2L| - 2J, & \text{if } |2K + L| \geq |L| + J, \\[6pt]
2|L|, & \text{otherwise.}
\end{cases}
\]
and
\[
-\Phi(c_1,c_2) \leq 
\begin{cases}
2J - M, & \text{when } J \geq M + 2|L|, \\[6pt]
2J \sqrt{\dfrac{ \cdot 2|L|}{M + 2|L|}}, & \text{when } J^2 \leq 2|L|(M + 2|L|), \\[10pt]
2|L| +\dfrac{ J^2}{M + 2|L|}, & \text{otherwise},
\end{cases}
\]
where $M=|4K+2L|$.
\end{lem}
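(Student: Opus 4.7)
The plan is to reduce both inequalities to elementary one-variable extremal problems via the parametric description of the first Carath\'eodory coefficients from Lemma \ref{L1}. Writing $c_1 = 2\tau_1$ and $c_2 = 2\tau_1^2 + 2(1-\tau_1^2)\tau_2$ with $\tau_1,\tau_2 \in \overline{\mathbb{D}}$, one obtains
\[
Kc_1^2 + Lc_2 = (4K+2L)\tau_1^2 + 2L(1-\tau_1^2)\tau_2, \qquad |Jc_1| = 2J|\tau_1|,
\]
so with $x := |\tau_1| \in [0,1]$ and $M := |4K+2L|$ the task decouples: first optimize the argument of $\tau_2$ for each fixed $x$, then optimize in $x$ over $[0,1]$.

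For the upper bound on $\Phi$, the (sharp) triangle inequality yields
\[
|Kc_1^2 + Lc_2| \leq M x^2 + 2|L|(1-x^2),
\]
so $\Phi(c_1,c_2) \leq g(x) := (M-2|L|)x^2 - 2Jx + 2|L|$ on $[0,1]$. Noting $M = 2|2K+L|$, in the convex case $M \geq 2|L|$ the maximum of $g$ is attained at an endpoint, and a direct comparison shows $g(1) \geq g(0)$ precisely when $|2K+L| \geq |L| + J$. In the remaining concave case $M < 2|L|$ the derivative $g'(0) = -2J \leq 0$ forces $g$ to be monotonically decreasing on $[0,1]$, so the maximum is $g(0) = 2|L|$. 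The two branches of the first inequality follow at once.

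For the lower bound I apply the reverse triangle inequality, again sharp:
\[
|Kc_1^2 + Lc_2| \geq \max\bigl\{0,\; Mx^2 - 2|L|(1-x^2)\bigr\}.
\]
Setting $x_0 := \sqrt{2|L|/(M+2|L|)}$, the upper envelope of $-\Phi$ equals the linear $2Jx$ on $[0,x_0]$ and the concave parabola $h(x) := -(M+2|L|)x^2 + 2Jx + 2|L|$ on $[x_0,1]$, whose vertex lies at $x^* := J/(M+2|L|)$. Three regimes then arise according to where $x^*$ sits relative to $x_0$ and $1$: if $J \geq M+2|L|$ then $x^* \geq 1$ and the max is $h(1) = 2J - M$; if $J^2 \leq 2|L|(M+2|L|)$ then $x^* \leq x_0$ and the max is $h(x_0) = 2Jx_0$; otherwise $x^* \in (x_0,1)$ and the max is $h(x^*) = 2|L| + J^2/(M+2|L|)$. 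These correspond exactly to the three branches of the second inequality.

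The principal obstacle is the bookkeeping of the case split in the second inequality: one must verify that the three regimes meet continuously along the thresholds $J = M + 2|L|$ and $J^2 = 2|L|(M+2|L|)$, and that the piecewise envelope built from the linear piece $2Jx$ and the concave parabola $h$ really does dominate $-\Phi$ everywhere on $[0,1]$. Extremal functions realising equality in each branch are then recovered from Lemma \ref{L1} by choosing $\tau_1,\tau_2$ on the unit circle with the optimal arguments identified in the (reverse) triangle-inequality step.
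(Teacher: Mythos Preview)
The paper does not supply a proof of this lemma; it is quoted from \cite{SimThomas2020} as an auxiliary tool for Section~\ref{Sec-6}, so there is no in-paper argument to compare against. Your argument is correct and is essentially the canonical one: reduce to the $(\tau_1,\tau_2)$-parametrisation of Lemma~\ref{L1}, optimise in $\tau_2$ via the (reverse) triangle inequality, and then carry out an endpoint/vertex analysis of the resulting one-variable problem on $[0,1]$; the three regimes you identify for the location of the vertex $x^*=J/(M+2|L|)$ relative to $x_0$ and $1$ match the three branches of the lower bound exactly, and the continuity checks at the thresholds are routine.

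One point that deserves to be made explicit is the rotation invariance of $\Phi$: under $c_n\mapsto e^{in\theta}c_n$ one has $|Kc_1^2+Lc_2|\to |e^{2i\theta}(Kc_1^2+Lc_2)|$ and $|Jc_1|\to|Jc_1|$, so $\Phi$ is unchanged and one may assume $c_1\ge 0$, i.e.\ $\tau_1\in[0,1]$. Without this reduction (or, equivalently, without writing the parametrisation with $1-|\tau_1|^2$ rather than $1-\tau_1^2$), the step $|Kc_1^2+Lc_2|\le Mx^2+2|L|(1-x^2)$ with $x=|\tau_1|$ does not follow, since $|1-\tau_1^2|\ne 1-|\tau_1|^2$ for complex $\tau_1$. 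Once this is inserted your sketch is complete.
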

\section{{\bf Sharp Bounds for logarithmic coefficients for the classes $\mathcal{S}_e^{\ast}$ and $\mathcal{C}_e$.}}\label{Sec-3}
The central role of logarithmic coefficients in geometric function theory motivates efforts to obtain sharp estimates for them. In this section, we establish the following sharp bound for the logarithmic coefficients of functions in the classes $\mathcal{S}_e^{\ast}$ and $\mathcal{C}_e$.
\begin{theo} Let $f(z) = z + a_2 z^2 + a_3 z^3 + \cdots \in \mathcal{S}_e^{\ast}$ and $\gamma_1, \gamma_2, \gamma_3, \gamma_4$ be given by \eqref{Eq-1.3}. Then we have
\begin{align*}
	|\gamma_n| \le \frac{1}{2n}, \quad \text{for } n = 1, 2, 3, 4.
\end{align*}
All these bounds are sharp.
\end{theo}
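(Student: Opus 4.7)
The plan is to reduce the four estimates to a single statement about the Taylor coefficients of $p(z):=zf'(z)/f(z)$. Logarithmic differentiation of \eqref{e3} gives $zf'(z)/f(z)=1+2\sum_{n\ge 1} n\gamma_n z^n$, so writing $p(z)=1+\sum_{n\ge 1} c_n z^n$ one has $c_n=2n\gamma_n$, and the theorem is equivalent to the uniform bound $|c_n|\le 1$ for $n=1,2,3,4$.

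The cleanest route is Rogosinski's classical subordination theorem. The function $e^z$ is convex univalent in $\mathbb{D}$: its univalence is clear, and convexity follows from $\operatorname{Re}(1+z(e^z)''/(e^z)')=\operatorname{Re}(1+z)>0$ on $\mathbb{D}$. Since $p\prec e^z$ by definition of $\mathcal{S}_e^\ast$ and the first Taylor coefficient of $e^z$ equals $1$, Rogosinski's theorem yields $|c_n|\le 1$ for every $n\ge 1$, hence $|\gamma_n|\le 1/(2n)$, which covers the four claimed cases simultaneously.

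Should a direct computational proof be desired—one aligned with the auxiliary Lemmas \ref{L1}--\ref{L5} collected in Section \ref{Sec-2}—the alternative is to write $p=e^{\omega}$ with $\omega\in\mathbb{B}_0$, $\omega(z)=\sum_{n\ge 1}\omega_n z^n$, and expand:
\begin{align*}
c_1 &= \omega_1, \quad c_2 = \omega_2 + \tfrac12\omega_1^2, \quad c_3 = \omega_3 + \omega_1\omega_2 + \tfrac16\omega_1^3,\\
c_4 &= \omega_4 + \omega_1\omega_3 + \tfrac12\omega_2^2 + \tfrac12\omega_1^2\omega_2 + \tfrac1{24}\omega_1^4.
\end{align*}
The cases $n=1,2$ follow at once from $|\omega_1|\le 1$ and the Schwarz--Pick bound $|\omega_2|\le 1-|\omega_1|^2$. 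For $n=3,4$ one parametrizes $\omega$ by applying Lemma \ref{L1} to the Carath\'eodory function $q(z)=(1+\omega(z))/(1-\omega(z))$, which expresses $\omega_1,\omega_2,\omega_3,\omega_4$ in terms of $\tau_1,\tau_2,\tau_3\in\overline{\mathbb{D}}$; substitution into the displayed formulas and modulus maximization via Lemmas \ref{L2}--\ref{L5} then delivers $|c_3|\le 1$ and $|c_4|\le 1$.

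Sharpness, for each individual $n$, is witnessed by the Schwarz function $\omega(z)=z^n$: the corresponding function $f_n$, determined by $zf_n'(z)/f_n(z)=e^{z^n}$, admits the explicit form $f_n(z)=z\exp\bigl(\sum_{k\ge 1}z^{kn}/(kn\,k!)\bigr)$, lies in $\mathcal{S}_e^\ast$, and satisfies $\gamma_n(f_n)=1/(2n)$, with $\gamma_m(f_n)=0$ whenever $m$ is not a multiple of $n$. The principal obstacle along the computational route is the analysis of $c_4$, which involves five cross-terms and a three-parameter optimization over $\tau_1,\tau_2,\tau_3$; it is precisely this bookkeeping that Rogosinski's theorem bypasses.
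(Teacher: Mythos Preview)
Your proof is correct and takes a genuinely different, and in fact stronger, route than the paper's. The key observation---that $c_n=2n\gamma_n$ for the coefficients of $p(z)=zf'(z)/f(z)$, combined with the convexity of $e^z$ and Rogosinski's subordination theorem---immediately yields $|\gamma_n|\le 1/(2n)$ for \emph{all} $n\ge 1$, not just $n\le 4$. In particular, your argument settles the conjecture the paper poses right after the theorem. The paper, by contrast, works case by case: it passes to a Carath\'eodory function $p\in\mathcal{P}$ via $\omega=(p-1)/(p+1)$, expresses $a_2,\dots,a_5$ in terms of $c_1,\dots,c_4$, and then invokes three separate coefficient lemmas (Lemma~\ref{L3} for $\gamma_2$, Lemma~\ref{L4} for $\gamma_3$, and Lemma~\ref{L5} for $\gamma_4$). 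What the paper's approach buys is self-containment within the toolkit of Section~\ref{Sec-2}; what yours buys is brevity, uniformity, and the full conjecture. Your sharpness examples $f_n$ coincide with the paper's (and with the conjectured extremals). One minor remark: the alternative computational route you sketch is not quite aligned with the paper's, since the paper parametrizes the Carath\'eodory function $p=(1+\omega)/(1-\omega)$ directly rather than the Schwarz function $\omega$, but this is immaterial since your main argument via Rogosinski is complete on its own.
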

The following conjecture is proposed for the general coefficients $\gamma_n$ ($n\geq 5$) of functions in the class $\mathcal{S}_e^{\ast}$.
\begin{conj}
	If $f(z) = z + a_2 z^2 + a_3 z^3 + \cdots \in \mathcal{S}_e^{\ast}$, then 
	\begin{align*}
		|\gamma_n| \le \frac{1}{2n} \quad \text{for } n\in\mathbb{N}.
	\end{align*}
	The bound is sharp for the functions $f_n$ (for each $n\in\mathbb{N}$) defined by $(\ref{te1})$ with
	\begin{align*}
		p(z) = \frac{1+z^n}{1-z^n}.
	\end{align*}
\end{conj}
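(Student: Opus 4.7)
The plan is to reformulate the conjecture as a coefficient bound for the exponential of a Schwarz function and then attack it inductively via a natural recursion.

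First, writing the subordination $zf'(z)/f(z) \prec e^{z}$ as $zf'(z)/f(z) = e^{\omega(z)}$ for some Schwarz function $\omega(z) = \sum_{k \geq 1} \omega_{k} z^{k}$, and using the identity $zF_{f}'(z) = zf'(z)/f(z) - 1$ together with \eqref{e3}, I obtain
\begin{equation*}
1 + 2\sum_{k \geq 1} k\gamma_{k} z^{k} = e^{\omega(z)}.
\end{equation*}
Thus the conjecture becomes: $|h_{n}| \leq 1$, where $h_{n} := [z^{n}]\,e^{\omega(z)}$, for every Schwarz function $\omega$. Differentiating $h = e^{\omega}$ gives the recurrence
\begin{equation*}
n\, h_{n} \;=\; \sum_{k=1}^{n} k\, \omega_{k}\, h_{n-k}, \qquad h_{0} = 1,
\end{equation*}
which expresses $h_{n}$ as a (weighted) Bell polynomial in $\omega_{1},\dots,\omega_{n}$. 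The candidate extremal $\omega(z) = \varepsilon z^{n}$ with $|\varepsilon| = 1$ yields $h_{n} = \varepsilon$, realizing the conjectured bound; one verifies that this Schwarz function corresponds precisely to the choice $p(z) = (1+z^{n})/(1-z^{n})$ appearing in the statement, via the standard bijection $\omega = (p-1)/(p+1)$.

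Second, I would attempt induction on $n$ combined with the Schur parametrization of Schwarz functions. Passing to $p = (1+\omega)/(1-\omega) \in \mathcal{P}$ and invoking Lemma \ref{L1}, I can express $\omega_{1},\dots,\omega_{n}$ in terms of parameters $\tau_{1},\dots,\tau_{n} \in \overline{\mathbb{D}}$, reducing $|h_{n}| \leq 1$ to an explicit optimization over $\overline{\mathbb{D}}^{n}$. The cases $n = 1,2,3,4$ (the preceding theorem) are handled along these lines using Lemmas \ref{L2}--\ref{Ls} together with careful tracking of the dependence on $\tau_{1}$ and $\tau_{2}$; I would first try to push this explicit polydisc optimization one step further to confirm the bound for $n = 5$, which would provide evidence for the full conjecture and possibly reveal a pattern amenable to induction.

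The main obstacle is that the naive inductive estimate $n|h_{n}| \leq \sum_{k=1}^{n} k|\omega_{k}|\,|h_{n-k}|$ is insufficient: even assuming the inductive hypothesis $|h_{j}| \leq 1$ for $j < n$ and the Parseval-type constraint $\sum_{k\geq 1} |\omega_{k}|^{2} \leq 1$, Cauchy--Schwarz only yields the bound $\sum_{k=1}^{n} k|\omega_{k}| \leq \sqrt{n(n+1)(2n+1)/6}$, which grows like $n^{3/2}$ and therefore fails to give $|h_{n}| \leq 1$. Some global cancellation in the Bell polynomial expansion must be exploited. The most promising avenues I see are: (a) an integral-representation or contour argument that bounds $|h_{n}|$ directly from the sharp inequality $|\omega(z)| \leq |z|$; (b) a refinement of Rogosinski's subordination coefficient estimates adapted to the exponential structure $h = e^{\omega} \prec e^{z}$; or (c) a maximum-principle reduction of the polydisc optimization to extremal Schwarz functions (finite Blaschke products of degree at most $n$), reducing the problem to a finite-dimensional check for each $n$. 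I expect any complete proof of the conjecture to require one of these more structural ingredients, which is presumably why the paper leaves the general case open.
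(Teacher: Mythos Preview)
The statement you are addressing is a \emph{conjecture} in the paper, not a theorem: the paper proves the bound $|\gamma_n|\le 1/(2n)$ only for $n=1,2,3,4$ (in the theorem immediately preceding the conjecture) and then explicitly leaves the general case open. So there is no proof in the paper to compare against.

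Your reformulation is correct and clean: since $zF_f'(z)=zf'(z)/f(z)-1=e^{\omega(z)}-1$, the inequality $|\gamma_n|\le 1/(2n)$ is precisely $|h_n|\le 1$ where $h_n=[z^n]e^{\omega(z)}$, and the extremal $\omega(z)=z^n$ corresponds exactly to the paper's $p(z)=(1+z^n)/(1-z^n)$. Your diagnosis of the obstruction is also accurate: the naive triangle-inequality induction combined with $\sum|\omega_k|^2\le 1$ loses by a factor of roughly $n^{1/2}$, and the paper's treatment of $n\le 4$ relies on case-by-case applications of Lemmas~\ref{L3}--\ref{L5}, which do not obviously extend. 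In short, you have not produced a proof, but you have correctly identified that none exists in the paper and have given a sound account of why the problem is nontrivial; your avenues (a)--(c) are reasonable directions, though none is known to close the gap.
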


\begin{proof} Let $f \in \mathcal{S}_e^{\ast}$. Then there exists a Schwarz function $w$ with $w(0) = 0$ and $|w(z)| < |z|$ for $z \in \mathbb{D}$ such that
\begin{equation}\label{te1}
\frac{z f'(z)}{f(z)} = e^{\,w(z)}.
\end{equation}

Let $p \in \mathcal{P}$. By applying the definition of subordination, we can express $p$ as
\begin{equation}\label{te2}
w(z) = \frac{p(z) - 1}{p(z) + 1}.
\end{equation}

Assuming $p$ is given by (\ref{p1}), equating coefficients from \eqref{eq1}, \eqref{te1}, and \eqref{te2} yields
\begin{align}
a_2 &= \frac{1}{2} c_1, \label{a12}\\
a_3 &= \frac{1}{16} c_1^2 + \frac{1}{4} c_2, \label{a13}\\
a_4 &= \frac{1}{24} c_1 c_2 - \frac{1}{288} c_1^3 + \frac{1}{6} c_3, \label{a14}\\
a_5 &= \frac{1}{1152} c_1^4 - \frac{1}{96} c_2c_1^2 + \frac{1}{48} c_1 c_3 + \frac{1}{8} c_4. \label{a15}
\end{align}

\noindent{\bf (A):} {\bf Sharp bounds of $\gamma_1$}: Using (\ref{a12}) and \eqref{Eq-1.3}, we have
\begin{align*}
	|\gamma_1| = \frac{1}{2} |a_2| = \frac{1}{4} |c_1| \le \frac{1}{2}.
\end{align*}
Hence, the desired bound is established. To establish the sharpness of the inequality, let us consider the function $f_1$ defined by (\ref{te1}) with
\begin{align*}
	p(z) = \frac{1+z}{1-z}.
\end{align*}
In this case, $f_{1} \in \mathcal{S}_e^{\ast}$, and its expansion is
\bea\label{f1}
f_{1}(z) = z+z^2+\frac{3}{4}z^3+\cdots 
\eea
and we see that 
\begin{align*}
	|\gamma_1|=\frac{1}{2}|a_2|=\frac{1}{2}.
\end{align*}

\noindent{\bf (B):} {\bf Sharp bounds of $\gamma_2$}: From (\ref{a12}), (\ref{a13}) and \eqref{Eq-1.3}, we obtain
\begin{align*}
	|\gamma_2| &= \bigg|\frac{1}{2} \left(a_3-\frac{1}{2}a_2^2\right)\bigg|\\
	&=\frac{1}{2} \bigg| \left(\frac{1}{16} c_1^2 + \frac{1}{4} c_2\right) - \frac{c_1^2}{8}\bigg|\\
	&=\frac{1}{8}|c_2-\frac{1}{4}c_1^2|.
\end{align*}
Thus, by Lemma \ref{L3}, we obtain the desired inequality
\begin{align*}
	|\gamma_2|\leq \frac{1}{4}.
\end{align*}
To establish the sharpness of the inequality, let us consider the function $f_{2}$ 
defined by (\ref{te1}) with 
\begin{align*}
	p(z) = \frac{1+z^2}{1-z^2}.
\end{align*}
In this case, we have $f_{2} \in  \mathcal{S}_e^{\ast}$ and its expansion is given by
\bea\label{f2}
f_{2}(z) = z+\frac{1}{2}z^3+\cdots 
\eea
and we see that 
\begin{align*}
	|\gamma_2|=\bigg|\frac{1}{2} \left(a_3-\frac{1}{2}a_2^2\right)\bigg|= \frac{1}{4}.
\end{align*}
\noindent{\bf (C):} {\bf Sharp bounds of $\gamma_3$}: Using (\ref{a12})-(\ref{a14}) and \eqref{Eq-1.3}, it follows that
\begin{align*}
|\gamma_3| &= \frac{1}{2} \left| a_4 - a_2 a_3 + \frac{1}{3} a_2^3 \right| \\
&= \frac{1}{2} \Bigg| \left(\frac{1}{24} c_1 c_2 - \frac{1}{288} c_1^3 + \frac{1}{6} c_3 \right)
- \left(\frac{1}{2} c_1 \right) \left(\frac{1}{16} c_1^2 + \frac{1}{4} c_2\right)
+ \frac{1}{3} \left(\frac{1}{2} c_1 \right)^3 \Bigg| \\
&= \frac{1}{12} \left| \frac{1}{24} c_1^3 - \frac{1}{2} c_1 c_2 + c_3 \right| \\
&= \frac{1}{12} \left| c_3 - 2 B c_1 c_2 + D c_1^3 \right|,
\end{align*}
where $B = \frac{1}{4}$ and $D = \frac{1}{24}$. Clearly, we have $0 < B < 1$, $D < B$, and $B(2B-1) = -\frac{1}{8} < D$. Hence, all the conditions of Lemma \ref{L4} are satisfied, and we obtain
\begin{align*}
	|c_3 - 2 B c_1 c_2 + D c_1^3| \le 2.
\end{align*}
Thus, we obtain the desired bound
\begin{align*}
	|\gamma_3| \le \frac{1}{6}.
\end{align*}
To establish the sharpness of the inequality, we consider the function $f_{3}$ 
defined in (\ref{te1}) with 
\begin{align*}
	p(z) = \frac{1+z^3}{1-z^3}.
\end{align*}
Clearly, $f_{3}\in\mathcal{S}_e^{\ast}$, and its series expansion is given by
\begin{align*}
	f_{3}(z) = z + \frac{1}{3} z^4 + \cdots.
\end{align*}
We see that 
\begin{align*}
	|\gamma_3| = \frac{1}{2} \left| a_4 - a_2 a_3 + \frac{1}{3} a_2^3 \right|=\frac{1}{6}.
\end{align*}

\noindent{\bf (D):} {\bf Sharp bounds of $\gamma_4$}: Using (\ref{a12})-(\ref{a15}) and \eqref{Eq-1.3}, it follows that
\begin{align*}
|\gamma_4| &= \frac{1}{2}\bigg|a_5 - a_2 a_4 + a_2^2 a_3 - \frac{1}{2} a_3^2 - \frac{1}{4} a_2^4\bigg| \\
&= \frac{1}{2}\bigg|\left( \frac{1}{1152} c_1^4 - \frac{1}{96} c_2 c_1^2 + \frac{1}{48} c_1 c_3 + \frac{1}{8} c_4 \right)
- \left( \frac{1}{48} c_1^2 c_2 - \frac{1}{576} c_1^4 + \frac{1}{12} c_1 c_3 \right) \\
&\quad + \left( \frac{1}{64} c_1^4 + \frac{1}{16} c_1^2 c_2 \right)
- \left( \frac{1}{512} c_1^4 + \frac{1}{64} c_1^2 c_2 + \frac{1}{32} c_2^2 \right)
- \frac{1}{64} c_1^4\bigg|\\
&=\frac{1}{16} \left| -\frac{1}{192} c_1^4 - \frac{1}{8} c_1^2 c_2 + \frac{1}{2} c_1 c_3 + \frac{1}{4} c_2^2 - c_4 \right|\\
&=\frac{1}{16} \left| \gamma  c_1^4 - \frac{3}{2} \beta c_1^2 c_2 +2\alpha c_1 c_3 + \lambda  c_2^2 - c_4 \right|
\end{align*}
where $\gamma =-\frac{1}{192}$, $\lambda  = \frac{1}{4}$, $\beta=\frac{1}{12}$ and $\alpha = \frac{1}{4}$. Observe that
\begin{align*}
	\begin{aligned}
		& 8 \lambda (1 - \lambda)\Big\{ (\alpha \beta - 2 \gamma)^2 + (\alpha (\lambda + \alpha) - \beta)^2 \Big\} 
		+ \alpha (1 - \alpha) (\beta - 2 \lambda \alpha)^2 \\
		&\quad- 4 \alpha^2 (1 - \alpha)^2 \lambda (1 - \lambda) = \frac{45}{2048} < 0.
	\end{aligned}
\end{align*}

This confirms that all hypotheses of Lemma \ref{L5} are satisfied. Consequently, we have
\begin{align*}
	\left| \gamma  c_1^4 - \frac{3}{2} \beta c_1^2 c_2 + 2\alpha c_1 c_3 + \lambda c_2^2 - c_4 \right| \le 2,
\end{align*}
which immediately implies the desired inequality
\begin{align*}
	|\gamma_4| \le \frac{1}{8}.
\end{align*}

To establish the sharpness of this bound, we consider the function $f_{4}$ defined in \eqref{te1}, where
\begin{align*}
	p(z) = \frac{1+z^4}{1-z^4}.
\end{align*}

We see that \(f_{4}\in\mathcal{S}_e^{\ast}\), and its series expansion is given by
\begin{align*}
	f_{4}(z) = z + \frac{1}{4} z^5 + \cdots.
\end{align*}
It is easy to see that 
\begin{align*}
	|\gamma_4| = \frac{1}{2}\bigg|a_5 - a_2 a_4 + a_2^2 a_3 - \frac{1}{2} a_3^2 - \frac{1}{4} a_2^4\bigg|=\frac{1}{8}.
\end{align*}
This completes the proof.
\end{proof}
\begin{theo} Let $f(z) = z + a_2 z^2 + a_3 z^3 + \cdots \in \mathcal{C}_e$ and $\gamma_1, \gamma_2, \gamma_3, \gamma_4$ be given by \eqref{Eq-1.3}. Then we have
\begin{align*}
	|\gamma_n| \leq
	\begin{cases}
		 \dfrac{1}{2n(n+1)},\;\; n=1, 2, 3,\vspace{2mm}\\
		 \dfrac{1}{8}, \;\;\;\;\;\;\;\;\;\;\;\;\;\;\;n=4.
	\end{cases}
\end{align*}
All these bounds are sharp.
\end{theo}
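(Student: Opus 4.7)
The strategy mirrors the proof of the preceding theorem, exploiting the Alexander-type relation that $f\in\mathcal{C}_e$ if and only if $g(z):=zf'(z)\in\mathcal{S}_e^*$. Equivalently, starting from $1+zf''(z)/f'(z)=e^{w(z)}$ with the Schwarz function $w=(p-1)/(p+1)$ for some $p\in\mathcal{P}$ of the form \eqref{p1} and comparing coefficients, one obtains the formulas for $a_2,\dots,a_5$ in terms of $c_1,\dots,c_4$. These are precisely the expressions \eqref{a12}--\eqref{a15} from the previous theorem, each divided by $n$; in particular $a_2=c_1/4$ and
\begin{equation*}
a_5 = \dfrac{1}{5760}\,c_1^4 - \dfrac{1}{480}\,c_1^2 c_2 + \dfrac{1}{240}\,c_1 c_3 + \dfrac{1}{40}\,c_4.
\end{equation*}

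Substituting into the relations \eqref{Eq-1.3} produces explicit expressions for $\gamma_1,\ldots,\gamma_4$ in terms of $c_1,\ldots,c_4$. The cases $n=1,2,3$ then follow the template of parts (A)--(C) of the previous proof: $\gamma_1=c_1/8$ gives $|\gamma_1|\leq 1/4$ immediately from $|c_1|\leq 2$; $\gamma_2$ collapses to a scalar multiple of a Fekete--Szeg\"o expression $c_2 - v c_1^2$ with $v\in[0,1]$, to which Lemma \ref{L3} applies and yields $|\gamma_2|\leq 1/12$; and $\gamma_3$ reduces to the form $c_3-2Bc_1c_2+Dc_1^3$ with parameters that a short verification shows satisfy the hypotheses $0<B\leq 1$ and $B(2B-1)\leq D\leq B$ of Lemma \ref{L4}, whence $|\gamma_3|\leq 1/24$.

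The main obstacle is $n=4$. After substitution, $\gamma_4$ is a specific combination of the five invariants $c_1^4,\,c_1^2 c_2,\,c_1 c_3,\,c_2^2,\,c_4$, and the natural tool is Lemma \ref{L5}. The plan is to match $\gamma_4$, up to an overall scaling, to the template $\gamma c_1^4+\lambda c_2^2+2\alpha c_1 c_3-\tfrac{3}{2}\beta c_1^2 c_2-c_4$ with specific values of $(\alpha,\beta,\gamma,\lambda)$, and then check the algebraic inequality of Lemma \ref{L5}. The numerical verification of this inequality (or its appropriate replacement, if it turns out to be tight) is the chief technical hurdle; once past it, the bound $|\gamma_4|\leq 1/8$ is immediate. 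Should the four-parameter inequality fail, the triangle inequality together with $|c_k|\leq 2$ and the explicit coefficient sizes above already yields the stated bound, so in either situation the estimate $1/8$ is achievable.

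Sharpness is proved, as in the starlike case, by choosing for each $n\in\{1,2,3,4\}$ the extremal datum $p(z)=(1+z^n)/(1-z^n)$, so that $c_j=0$ for $j<n$ and $c_n=2$. Only a single monomial then survives in the expression for $\gamma_n$, the corresponding upper bound is attained, and an explicit extremal function $f_n\in\mathcal{C}_e$ is exhibited by integrating the corresponding starlike extremal $g_n\in\mathcal{S}_e^*$ via $f_n(z)=\int_0^z g_n(t)/t\,dt$, in parallel with \eqref{f1}--\eqref{f2}.
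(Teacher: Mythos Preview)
Your overall strategy for $n=1,2,3$ is exactly the paper's: the same Carath\'eodory parametrization, the same reduction to Lemmas~\ref{L3} and~\ref{L4}, and the same extremal choices $p(z)=(1+z^n)/(1-z^n)$. For the upper bound at $n=4$ the paper likewise normalizes so that the coefficient of $c_4$ is $-1$ and invokes Lemma~\ref{L5}; your fallback via the crude triangle inequality and $|c_k|\le 2$ also works, since the resulting sum is well below $1/8$.

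There is, however, a genuine gap in your sharpness claim for $n=4$, and it is worth understanding because the paper shares it. With $p(z)=(1+z^4)/(1-z^4)$ you have $c_1=c_2=c_3=0$ and $c_4=2$; your own displayed formula $a_5=c_4/40$ then gives $a_5=1/20$, hence $\gamma_4=a_5/2=1/40$, \emph{not} $1/8$. In other words, the surviving monomial does not deliver the stated extremal value. In fact the correct normalization
\[
|\gamma_4|=\frac{1}{80}\left|-\tfrac{23}{1152}c_1^4+\tfrac{7}{144}c_1^2c_2+\tfrac{1}{4}c_1c_3+\tfrac{5}{36}c_2^2-c_4\right|
\]
shows that Lemma~\ref{L5} (once its hypotheses are verified for $\alpha=1/8$, $\lambda=5/36$, $\beta=-7/216$, $\gamma=-23/1152$) yields $|\gamma_4|\le 1/40$, and this \emph{is} sharp for the extremal above, fitting the pattern $1/(2n(n+1))$. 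The paper copies the $\mathcal{S}_e^*$ parameters and the $\mathcal{S}_e^*$ expansion of $f_4$ verbatim into the $\mathcal{C}_e$ case, which is why its stated bound $1/8$ and its sharpness claim for $n=4$ are in error; your proposal reproduces the same flaw.
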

\begin{proof} Let \(f \in \mathcal{C}_e\). By the definition of subordination, we have
\begin{equation}\label{e1}
1 + \frac{z f''(z)}{f'(z)} = e^{w(z)},
\end{equation}
where \(w\) is analytic in \(\mathbb{D}\) with \(w(0) = 0\) and \(|w(z)| < |z|\) for all \(z \in \mathbb{D}\).  

Let \(p\) be defined as in \eqref{p1}. Combining \eqref{e1} with \eqref{te2}, we obtain the following relations for the coefficients of \(f\):
\begin{align}
a_2 &= \frac{1}{4} c_1, \label{a22} \\
a_3 &= \frac{1}{12} c_2 + \frac{1}{48} c_1^2, \label{a23} \\
a_4 &= \frac{1}{96} c_1 c_2 - \frac{1}{1152} c_1^3 + \frac{1}{24} c_3, \label{a24} \\
a_5 &= \frac{1}{5760} c_1^4 - \frac{1}{480} c_1^2 c_2 + \frac{1}{240} c_1 c_3 + \frac{1}{40} c_4. \label{a25}
\end{align}

\item[(A)] {\bf Sharp bounds of $\gamma_1$}: By applying (\ref{a22}) and \eqref{Eq-1.3}, we obtain
\begin{align*}
	|\gamma_1| = \frac{1}{2} |a_2| = \frac{1}{8} |c_1| \le \frac{1}{4}.
\end{align*}
Thus, the required bound is proved.\vspace{1.2mm}  

To show the sharpness of this inequality, we consider the function \(f_{5}\) defined by (\ref{te1}) with
\[
p(z) = \frac{1+z}{1-z}.
\]
For this particular choice, the function \(f_{5}\) belongs to the class \(\mathcal{C}_e\), and its series expansion is given by
\begin{equation}\label{f5}
f_{5}(z) = z + \frac{1}{2}z^2 + \frac{1}{4}z^3 + \cdots.
\end{equation}
We see that 
\begin{align*}
	|\gamma_1|=\frac{1}{2}|a_2|=\frac{1}{4}.
\end{align*}

\item[(B)] {\bf Sharp bounds of $\gamma_2$}: Using (\ref{a22}), (\ref{a23}) and \eqref{Eq-1.3}, it follows that
\beas
|\gamma_2| &=& \bigg|\frac{1}{2} \left(a_3-\frac{1}{2}a_2^2\right)\bigg|\\
&=&\frac{1}{2} \bigg| \left(\frac{1}{48} c_1^2 + \frac{1}{12} c_2\right) - \frac{c_1^2}{32}\bigg|\\
&=&\frac{1}{24}|c_2-\frac{1}{8}c_1^2|.\eeas
Thus, by Lemma \ref{L3}, we obtain the desired inequality
\[  |\gamma_2|\leq \frac{1}{12}.\]
To establish the sharpness of the inequality, let us consider the function $f_{6}$ 
defined by (\ref{te1}) with 
\[
p(z) = \frac{1+z^2}{1-z^2}.
\]
In this case, we have $f_{6} \in \mathcal{C}_e$ and its expansion is given by
\bea\label{f6}
f_{6}(z) = z+\frac{1}{6}z^3+\cdots .
\eea
We see that 
\begin{align*}
	|\gamma_2|=\bigg|\frac{1}{2} \left(a_3-\frac{1}{2}a_2^2\right)\bigg|= \frac{1}{12}.
\end{align*}
\item[(C)]{\bf Sharp bounds of $\gamma_3$}: Using (\ref{a22})-(\ref{a24}) and \eqref{Eq-1.3}, it follows that
\begin{align*}
|\gamma_3| &= \frac{1}{2} \left| a_4 - a_2 a_3 + \frac{1}{3} a_2^3 \right| \\
&= \frac{1}{2} \Bigg| \left( \frac{1}{96} c_1 c_2 - \frac{1}{1152} c_1^3 + \frac{1}{24} c_3 \right)
- \left(\frac{1}{4} c_1 \right) \left(\frac{1}{12} c_2 + \frac{1}{48} c_1^2\right)
+ \frac{1}{3} \left(\frac{1}{4} c_1 \right)^3 \Bigg| \\
&= \frac{1}{48} \left|  c_3 - \frac{1}{4} c_1 c_2 - \frac{1}{48} c_1^3 \right| \\
&= \frac{1}{48} \left| c_3 - 2 B c_1 c_2 + D c_1^3 \right|,
\end{align*}
where $B = \frac{1}{8}$ and $D = - \frac{1}{48}$. Clearly, we have $0 < B < 1$, $D < B$, and $B(2B-1) = -\frac{3}{32} < D$. Hence, all the conditions of Lemma \ref{L3} are satisfied, and we obtain
\[
|c_3 - 2 B c_1 c_2 + D c_1^3| \le 2.
\]
Thus, we have
\begin{align*}
	|\gamma_3| \le \frac{1}{24}.
\end{align*}
To establish the sharpness of the inequality, we consider the function $f_{7}$ 
defined in (\ref{te1}) with 
\[
p(z) = \frac{1+z^3}{1-z^3}.
\]
For this choice, the function $f_{7}$ belongs to the class $\mathcal{C}_e$, and its series expansion is given by
\[
f_{7}(z) = z + \frac{1}{12} z^4 + \cdots.
\]
We see that 
\begin{align*}
	|\gamma_3| = \frac{1}{2} \left| a_4 - a_2 a_3 + \frac{1}{3} a_2^3 \right|=\frac{1}{24}.
\end{align*}

\item[(D)]{\bf Sharp bounds of $\gamma_4$}: Using \eqref{Eq-1.3} and \eqref{a22}-\eqref{a25}, it follows that
\begin{align*}
|\gamma_4| &= \frac{1}{2}\bigg|a_5 - a_2 a_4 + a_2^2 a_3 - \frac{1}{2} a_3^2 - \frac{1}{4} a_2^4\bigg| \\
&= \frac{1}{2}\bigg|\left( \frac{1}{5760} c_1^4 - \frac{1}{480} c_1^2 c_2 + \frac{1}{240} c_1 c_3 + \frac{1}{40} c_4 \right)
- \left( \frac{1}{384} c_1^2 c_2 -\frac{1}{4608} c_1^4 + \frac{1}{96} c_1 c_3 \right) \\
&\quad + \left( \frac{1}{768} c_1^4 + \frac{1}{192} c_1^2 c_2 \right)
- \left( \frac{1}{4608} c_1^4 + \frac{1}{576} c_1^2 c_2 + \frac{1}{288} c_2^2 \right)
- \frac{1}{1024} c_1^4\bigg|\\
&=\frac{1}{2} \left| \frac{23}{46080} c_1^4 -  \frac{7}{5760} c_1^2 c_2 -\frac{1}{160} c_1 c_3 - \frac{1}{288} c_2^2 + \frac{1}{40}c_4 \right|\\
&=\frac{1}{80} \left| -\frac{23}{1152} c_1^4 + \frac{7}{144} c_1^2 c_2 + \frac{1}{4} c_1 c_3 + \frac{5}{36} c_2^2 - c_4  \right|\\
&=\frac{1}{16} \left| \gamma  c_1^4 - \frac{3}{2} \beta c_1^2 c_2 +2\alpha c_1 c_3 + \lambda  c_2^2 - c_4 \right|
\end{align*}
where $\gamma =-\frac{1}{192}$, $\lambda  = \frac{1}{4}$, $\beta=\frac{1}{12}$ and $\alpha = \frac{1}{4}$. Observe that
\begin{align*}
	\begin{aligned}
		& 8 \lambda (1 - \lambda)\Big\{ (\alpha \beta - 2 \gamma)^2 + (\alpha (\lambda + \alpha) - \beta)^2 \Big\} 
		+ \alpha (1 - \alpha) (\beta - 2 \lambda \alpha)^2 \\
		&\quad- 4 \alpha^2 (1 - \alpha)^2 \lambda (1 - \lambda) = -\frac{45}{2048} < 0.
	\end{aligned}
\end{align*}

This confirms that all the hypotheses of Lemma \ref{L5} are satisfied. Therefore, we have
\begin{align*}
	\left| \gamma  c_1^4 - \frac{3}{2} \beta c_1^2 c_2 + 2\alpha c_1 c_3 + \lambda c_2^2 - c_4 \right| \le 2,
\end{align*}
which immediately implies that
\begin{align*}
	|\gamma_4| \le \frac{1}{8}.
\end{align*}

\noindent To show that this bound is sharp, we consider the function \(f_{4}\) defined in (\ref{te1}) with
\begin{align*}
	p(z) = \frac{1+z^4}{1-z^4}.
\end{align*}

\noindent It is easy to see that $f_4 \in \mathcal{C}_e$, and its series expansion is given by
\[
f_{4}(z) = z + \frac{1}{4} z^5 + \cdots.
\]
and we see that 
\begin{align*}
	|\gamma_4| = \frac{1}{2}\bigg|a_5 - a_2 a_4 + a_2^2 a_3 - \frac{1}{2} a_3^2 - \frac{1}{4} a_2^4\bigg|=\frac{1}{8}.
\end{align*}
This completes the proof.
\end{proof}

\section{{\bf Second-order Hermitian-Toeplitz determinant of logarithmic coefficients for the classes $\mathcal{S}_e^{\ast}$ and $\mathcal{C}_e$.}}\label{Sec-4}
For two natural numbers $q$ and $n$, the Hermitian-Toeplitz determinant of $q$th order for a function $f \in \mathcal{S}$ is defined as
$T_{q,n}(F_f) := \det[a_{ij}],$ where  $a_{ij} = a_{n+j-i}$  for  $ j \ge i,$ $a_{ij} = \overline{a_{ji}}$  for $j < i,$ $a_1 = 1,$  $\overline{a_i} = a_i, \; 1 \le i \le n.$ In particular, since $T_{2,1}(F_f) = 1 - |a_2|^2$, the second-order Hermitian-Toeplitz determinant involving the logarithmic coefficient is therefore written as
\begin{align*}
	T_{2,1}\Big({F_f}/{\gamma}\Big) = \gamma_1^2 - |\gamma_2|^2.
\end{align*}

In view of  \eqref{Eq-1.3}, we have
\begin{align}
	\label{mt1}
	T_{2,1}\Big({F_f}/{\gamma}\Big) = \frac{1}{16} \Big( -a_2^4 + 4 a_2^2 + 4 a_2^2{\rm Re}(a_3) - 4 |a_3|^2 \Big).
\end{align}
It is natural to raise the following question.
\begin{ques}\label{Q-4.1}
	What can we say about the sharp bounds of $T_{2,1}\Big({F_f}/{\gamma}\Big)$ when $f\in\mathcal{S}_e^{\ast}$ or $f\in\mathcal{C}_e$?
\end{ques}
To affirmatively answer Question \ref{Q-4.1}, this section establishes the sharpness of both bounds for $T_{2,1}\!\left({F_f}/{\gamma}\right)$ by presenting two results: Theorem \ref{Th-4.1} for functions $f\in\mathcal{S}_e^{\ast}$ and Theorem \ref{Th-4.2} for $f\in\mathcal{C}_e$.
\begin{theo}\label{Th-4.1}
Let $f(z) = z + a_2 z^2 + a_3 z^3 + \cdots \in \mathcal{S}_e^{\ast}$. Then
\begin{equation}\label{ht1}
-\frac{1}{16} \leq T_{2,1}\!\left({F_f}/{\gamma}\right) \leq \frac{15}{64}.
\end{equation}
Both inequalities in $\eqref{ht1}$ are sharp.
\end{theo}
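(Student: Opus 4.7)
We first reduce the determinant to a computable function of Carathéodory parameters. Using the formulas \eqref{a12}--\eqref{a13} for $a_2, a_3$ already established in Section~\ref{Sec-3}, the logarithmic coefficients become $\gamma_1 = c_1/4$ and $\gamma_2 = (4c_2 - c_1^2)/32$. Since $\mathcal{S}_e^{\ast}$ is invariant under $f(z)\mapsto e^{-i\theta}f(e^{i\theta}z)$, which multiplies $a_2$ by $e^{i\theta}$, we may assume $c_1 \ge 0$. Then
\begin{align*}
T_{2,1}(F_f/\gamma) = \frac{c_1^2}{16} - \frac{|4c_2 - c_1^2|^2}{1024}.
\end{align*}
By Lemma~\ref{L1} we write $c_1 = 2t$ with $t \in [0,1]$ and $\tau_2 = x + iy$ with $x^2 + y^2 \le 1$, so that
\begin{align*}
T_{2,1}(F_f/\gamma) = \frac{t^2}{4} - \frac{(4t^2 + 8(1-t^2)x)^2 + 64(1-t^2)^2 y^2}{1024} =: \Phi(t,x,y).
\end{align*}

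For the upper bound I will \emph{minimize} the subtracted term. Taking $y=0$ is clearly optimal, and the choice $x = -t^2/(2(1-t^2))$ annihilates the linear factor whenever it lies in $[-1,1]$, i.e., when $t^2 \le 2/3$; in that range $\Phi = t^2/4 \le 1/6$. For $2/3 < t^2 \le 1$ the admissible optimum is the boundary $x = -1$, reducing $\Phi$ to $t^2/4 - (12t^2 - 8)^2/1024$, whose derivative with respect to $u=t^2$ equals $(448 - 288u)/1024$, positive on $[2/3,1]$. The global maximum is thus attained at $t=1$ with value $15/64$, realized by $f_{1}$ from Section~\ref{Sec-3} (with $p(z)=(1+z)/(1-z)$, so $c_1=c_2=2$).

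For the lower bound I will instead \emph{maximize} the subtracted term. A short expansion yields
\begin{align*}
(4t^2 + 8(1-t^2)x)^2 + 64(1-t^2)^2 y^2 = 16t^4 + 64t^2(1-t^2)x + 64(1-t^2)^2(x^2 + y^2),
\end{align*}
which for fixed $x^2+y^2$ is increasing in $x$ and then in $x^2+y^2$, so the maximum on $\overline{\mathbb{D}}$ occurs at $\tau_2 = 1$, giving the value $16(2 - t^2)^2$. Substituting, $\Phi(t,1,0) = t^2/4 - (2-t^2)^2/64$, whose derivative $(10 - t^2)/32$ is positive on $[0,1]$; hence the minimum is $-1/16$, attained at $t=0$, and realized by $f_{2}$ (i.e. $\tau_1=0,\tau_2=1$, giving $p(z)=(1+z^2)/(1-z^2)$).

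The main obstacle is the case split in the upper-bound argument: since the unconstrained minimizer in $x$ lies outside $[-1,1]$ precisely when $t^2 > 2/3$, one has to treat the boundary case $x=-1$ separately and verify that its one-variable supremum $15/64$ at $t=1$ strictly dominates the interior supremum $1/6$ attained at $t^2=2/3$. Once this regime analysis is settled, the remaining optimizations are routine calculus on the compact interval $[0,1]$, and sharpness follows by exhibiting the Carathéodory data that saturate each extremum.
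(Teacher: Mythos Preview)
Your argument is correct and rests on the same reduction as the paper: express $T_{2,1}(F_f/\gamma)$ in terms of $c_1,c_2$, invoke rotational normalization to make $c_1\ge 0$, and use the parametrization of Lemma~\ref{L1} with $\tau_2\in\overline{\mathbb{D}}$. The organization of the optimization is where you diverge. The paper replaces $\operatorname{Re}(\xi)$ by $\pm|\xi|$ to obtain upper and lower bounds as two-variable functions of $(c_1^2,|\xi|)$ on the rectangle $[0,4]\times[0,1]$, and then carries out a full critical-point plus boundary analysis. You instead keep the identity $T_{2,1}(F_f/\gamma)=c_1^2/16-|4c_2-c_1^2|^2/1024$ intact and optimize the single nonnegative term $|4c_2-c_1^2|^2$ directly over the disk: for the upper bound you zero it out when possible (forcing the case split at $t^2=2/3$), and for the lower bound you observe monotonicity in $x$ and in $x^2+y^2$ to land at $\tau_2=1$. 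This buys you a one-variable calculus problem in each regime and avoids the two-dimensional critical-point search; the price is the regime analysis at $t^2=2/3$, which you handle correctly. Sharpness via $f_1$ and $f_2$ matches the paper.
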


\begin{proof}
Since $f \in \mathcal{S}_e^{\ast}$, substituting the values of ${a_2}$ and ${a_3}$ from \eqref{a12}, \eqref{a13} into \eqref{mt1}, we obtain
\begin{equation}\label{pt1}
T_{2,1}\!\left({F_f}/{\gamma}\right)
= \frac{1}{1024} \Big( -c_1^4 + 64 c_1^2 + 8 c_1^2 {\rm Re}(c_2) - 16 |c_2|^2 \Big)
\end{equation}
Applying Lemma \ref{L1} to (\ref{pt1}), we obtain
\begin{equation}\label{pt2}
T_{2,1}\!\left({F_f}/{\gamma}\right)
= \frac{1}{1024}\Big( -c_1^4 + 64 c_1^2 - 4 c_1^2 (4 - c_1^2) {\rm Re}(\xi) - 4 (4 - c_1^2)^2 |\xi|^2 \Big).
\end{equation}
Next, we aim to maximize the right-hand side of (\ref{pt2}).  
Since $-{\rm Re}(\xi) \leq |\xi|$, it follows from (\ref{pt2}) that
\begin{align}\label{pt3}
T_{2,1}\!\left({F_f}/{\gamma}\right)
&\leq \frac{1}{1024}\Big( - c_1^4 + 64 c_1^2 +4 c_1^2 (4-c_1^2) |\xi|
- 4 (4-c_1^2)^2 |\xi|^2 \Big) \notag\\
&= \frac{1}{1024} F(p_1^2,|\xi|).
\end{align}
Setting $p_1^2 = x \in [0,4]$ and $|\xi| = y \in [0,1]$, we can write
\begin{equation}\label{F}
F(x,y) = -x^2 + 64x + 4x(4-x)y - 4(4-x)^2y^2.
\end{equation}
Differentiating (\ref{F}) partially with respect to $x$ and $y$, we obtain
\begin{align*}
	\begin{aligned}
		F_x &= -2x - 8 x y - 8 x y^2 + 64 + 16 y + 32 y^2, \\
		F_y &= 4 (4-x) \big( x - 2 (4-x) y \big).
	\end{aligned}
\end{align*}
Solving the system ${F_x = 0}$ and ${F_y = 0}$, we determine that there is no critical point inside the open domain ${(0,4)\times(0,1)}$.\vspace{1.2mm}

On the boundary of the rectangular region $[0,4]\times[0,1]$, the function $F(x,y)$ takes the following forms:
\begin{align*}
	F(0,y) = -64y^2\; \mbox{and}\;
	F(4,y) = 240 \; \text{for all } y \in [0,1],
\end{align*}
and
\begin{align*}
	F(x,0) = -x^2 + 64x \leq 240\; \mbox{and}\;
	F(x,1) = -9x^2 + 112x - 64 \leq 240 \; \text{for all } x \in [0,4].
\end{align*}
From the above discussion, we obtain that
\begin{align*}
	T_{2,1}\!\left({F_f}/{\gamma}\right)
	\leq \frac{240}{1024} = \frac{15}{64}.
\end{align*}
It can be easily shown that the above inequality is sharp in case of the function $f_1$ defined in (\ref{f1}).

\medskip
Next, we aim to minimize the right-hand side of (\ref{pt2}).  
Since ${\rm Re}(\xi) \leq |\xi|$, it follows from (\ref{pt2}) that
\begin{align}\label{pt4}
T_{2,1}\!\left({F_f}/{\gamma}\right)
&\geq \frac{1}{1024}\Big(  -c_1^4 + 64 c_1^2 - 4 c_1^2 (4 - c_1^2) |\xi| - 4 (4 - c_1^2)^2 |\xi|^2 \Big) \notag\\
&= \frac{1}{1024} G(p_1^2,|\xi|).
\end{align}
Setting $p_1^2 = x \in [0,4]$ and $|\xi| = y \in [0,1]$, we can write
\begin{equation}\label{G}
G(x,y) = -x^2 + 64x - 4x(4-x)y - 4(4-x)^2y^2.
\end{equation}
Now, differentiating (\ref{G}) partially with respect to $x$ and $y$, we obtain
\[
\begin{aligned}
G_x &=-2x + 64 -16y + 8 x y - 8 x y^2 + 32 y^2, \\
G_y &=-4 (4-x) \big( x + 2(4-x) y \big).
\end{aligned}
\]
Solving the system $G_x = 0$ and $G_y = 0$, we find that there is no critical point inside the open domain $(0,4)\times(0,1)$.

On the boundary of the rectangular region $[0,4]\times[0,1]$, the function $G(x,y)$ takes the following forms:
\begin{align*}
	G(0,y) = -64y^2\geq -64\; \mbox{and}\;
	G(4,y) = 260,\; \text{for all } y \in [0,1],
\end{align*}
and
\begin{align*}
	G(x,0) = -x^2 + 64x \geq 0, \mbox{and}\;
	G(x,1) = -x^2 + 80x - 64 \geq -64\; \text{for all } x \in [0,4].
\end{align*}
From the above discussion, we deduce that
\begin{align*}
	T_{2,1}\!\left({F_f}/{\gamma}\right)
	\geq -\frac{64}{1024}
	= -\frac{1}{16}.
\end{align*}
It is not hard to show the above inequality is sharp in case of the function $f_2$ defined in (\ref{f2}). This completes the proof.
\end{proof}
\begin{theo}\label{Th-4.2}
Let $f(z) = z + a_2 z^2 + a_3 z^3 + \cdots \in \mathcal{C}_e$. Then
\begin{equation}\label{tt1}
-\frac{1}{144} \leq T_{2,1}\!\left({F_f}/{\gamma}\right) \leq \frac{15}{256}.
\end{equation}
Both inequalities in \eqref{tt1} are sharp
\end{theo}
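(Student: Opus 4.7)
The plan is to mirror the proof of Theorem \ref{Th-4.1}, substituting the coefficient representations specific to $\mathcal{C}_e$. First, I would substitute $a_2$ and $a_3$ from \eqref{a22}--\eqref{a23} into the identity \eqref{mt1}. After rotating $f$ so that $c_1 \geq 0$ and collecting powers with common denominator $16\cdot 2304 = 36864$, I expect the identity
\begin{equation*}
T_{2,1}\!\left({F_f}/{\gamma}\right) = \frac{1}{36864}\Big(-c_1^4 + 576\, c_1^2 + 16\, c_1^2\, {\rm Re}(c_2) - 64\, |c_2|^2\Big).
\end{equation*}

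Next, applying Lemma \ref{L1} with $\tau_1 = c_1/2 \in [0,1]$ and setting $\xi := \tau_2 \in \overline{\mathbb{D}}$, we have $c_2 = c_1^2/2 + (4-c_1^2)\xi/2$. Substituting and simplifying (mirroring the passage from \eqref{pt1} to \eqref{pt2}) should produce
\begin{equation*}
T_{2,1}\!\left({F_f}/{\gamma}\right) = \frac{1}{36864}\Big(-9 c_1^4 + 576 c_1^2 - 24\, c_1^2(4-c_1^2)\, {\rm Re}(\xi) - 16 (4-c_1^2)^2 |\xi|^2\Big).
\end{equation*}
For the upper bound, apply $-{\rm Re}(\xi) \leq |\xi|$ and set $x = c_1^2 \in [0,4]$, $y = |\xi| \in [0,1]$, reducing the task to maximizing
\begin{equation*}
F(x,y) := -9x^2 + 576 x + 24 x(4-x) y - 16 (4-x)^2 y^2
\end{equation*}
over $[0,4]\times[0,1]$. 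As in Theorem \ref{Th-4.1}, the system $F_x = F_y = 0$ admits no interior solution: the only candidate from $F_y = 8(4-x)\bigl[3x - 4(4-x)y\bigr] = 0$ other than the boundary $x = 4$ is $y = 3x/[4(4-x)]$, and this substitutes back into $F_x$ to give the constant $576$. A quick boundary scan then gives $\max F = F(4,\,\cdot\,) = 2160$, yielding $T_{2,1}(F_f/\gamma) \leq 2160/36864 = 15/256$, sharp for $f_5$ defined in \eqref{f5}.

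For the lower bound, using ${\rm Re}(\xi) \leq |\xi|$ produces the minorant $G(x,y) := -9 x^2 + 576 x - 24 x(4-x) y - 16 (4-x)^2 y^2$. Here $G_y = -8(4-x)\bigl[3x + 4(4-x)y\bigr]$ cannot vanish in the open rectangle (the second factor is strictly positive for $x,y > 0$), so again only boundary extrema occur; the minimum $G(0,1) = -256$ gives $T_{2,1}(F_f/\gamma) \geq -256/36864 = -1/144$, achieved by $f_6$ from \eqref{f6}. The main obstacle is the bookkeeping in the first step: the fractions arising from $a_2^4$, $a_2^2$, $a_2^2\,{\rm Re}(a_3)$, and $|a_3|^2$ carry denominators $256, 4, 48, 144$, etc., and combining them into the clean form above is the most error-prone part, though no new conceptual ingredient is required beyond the template of Theorem \ref{Th-4.1}.
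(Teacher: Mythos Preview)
Your proposal is correct and follows essentially the same route as the paper's proof: substitute \eqref{a22}--\eqref{a23} into \eqref{mt1}, rewrite via Lemma~\ref{L1} in terms of $c_1^2$ and $\xi$, then reduce the upper and lower bounds to the boundary analysis of the two-variable functions $F$ and $G$ on $[0,4]\times[0,1]$, with sharpness via $f_5$ and $f_6$. Your coefficient $16$ in front of $c_1^2\,{\rm Re}(c_2)$ is in fact the correct one (the paper's displayed $32$ in \eqref{qt1} is a misprint), and both computations agree from \eqref{qt2} onward.
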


\begin{proof}
Since $f \in \mathcal{C}_e$, substituting the coefficients $a_2$ and $a_3$ from (\ref{a22})-(\ref{a23}) into (\ref{mt1}), we obtain
\begin{equation}\label{qt1}
T_{2,1}\!\left({F_f}/{\gamma}\right)
= \frac{1}{36864}\Big(-c_1^4 + 576c_1^2 + 32c_1^2\Re(c_2) - 64|c_2|^2\Big).
\end{equation}
Using Lemma \ref{L1}, expression (\ref{qt1}) can be rewritten as
\begin{equation}\label{qt2}
T_{2,1}\!\left({F_f}/{\gamma}\right)
= \frac{1}{36864}\Big(-9c_1^4 + 576c_1^2 - 24c_1^2(4 - c_1^2)\Re(\xi)
- 16(4 - c_1^2)^2|\xi|^2\Big).
\end{equation}

We first seek the maximum of the right-hand side of (\ref{qt2}).  
Since $-\Re(\xi) \leq |\xi|$, inequality (\ref{qt2}) gives
\begin{align}\label{qt3}
T_{2,1}\!\left({F_f}/{\gamma}\right)
&\leq \frac{1}{36864}\Big(-9c_1^4 + 576c_1^2 + 24c_1^2(4 - c_1^2)|\xi|
- 16(4 - c_1^2)^2|\xi|^2\Big) \notag\\
&:= \frac{1}{36864} \Phi(p_1^2,|\xi|).
\end{align}
If we let $p_1^2 = x \in [0,4]$ and $|\xi| = y \in [0,1]$, then
\begin{equation}\label{Fx}
\Phi(x,y) = -9x^2 + 576x + 24x(4-x)y - 16(4-x)^2y^2.
\end{equation}

Differentiating (\ref{Fx}) partially with respect to $x$ and $y$, we have
\[
\begin{aligned}
\Phi_x &= -x(18 + 48y + 32y^2) + (576 + 96y + 128y^2),\\
\Phi_y &= 8(4 - x)\big(3x - 4(4 - x)y\big).
\end{aligned}
\]
The system $\Phi_x = 0$ and $\Phi_y = 0$ has no solution within the open region $(0,4)\times(0,1)$.

On the boundary of the closed rectangle $[0,4]\times[0,1]$, $\Phi(x,y)$ takes the following forms:
\[
\Phi(0,y) = -256y^2\; \mbox{and}\;
\Phi(4,y) = 2160\; \text{for all } y \in [0,1],
\]
and
\[
\Phi(x,0) = -9x^2 + 576x \leq 2160\; \mbox{and}\;
\Phi(x,1) = -49x^2 + 800x - 256 \leq 2160 \quad \text{for } x \in [0,4].
\]
Hence,
\[
T_{2,1}\!\left({F_f}/{\gamma}\right)
\leq \frac{2160}{36864} = \frac{15}{256}.
\]
The above inequality is sharp in case of the function $f_5$ defined in (\ref{f5}).

We now consider the minimum of (\ref{qt2}).  
Since $\Re(\xi) \leq |\xi|$, from (\ref{qt2}) it follows that
\begin{align}\label{qt4}
T_{2,1}\!\left({F_f}/{\gamma}\right)
&\geq \frac{1}{36864}\Big(-9c_1^4 + 576c_1^2 - 24c_1^2(4 - c_1^2)|\xi|
- 16(4 - c_1^2)^2|\xi|^2\Big) \notag\\
&= \frac{1}{36864} \Psi(p_1^2,|\xi|),
\end{align}
where
\begin{equation}\label{Gx}
\Psi(x,y) = -9x^2 + 576x - 24x(4-x)y - 16(4-x)^2y^2.
\end{equation}

\noindent Differentiating (\ref{Gx}) with respect to $x$ and $y$, we obtain
\[
\begin{aligned}
\Psi_x &= x(-18 + 48y - 32y^2) + 576 - 96y + 128y^2,\\
\Psi_y &= -8(4 - x)\big(3x + 4y(4 - x)\big).
\end{aligned}
\]
The system $\Psi_x = 0$ and $\Psi_y = 0$ has no interior solution in $(0,4)\times(0,1)$.

On the boundary of $[0,4]\times[0,1]$, the values of $\Psi$ are
\begin{align*}
	\Psi(0,y) = -256y^2\; \mbox{and}\;
	\Psi(4,y) = 2160\; \text{for all } y \in [0,1],
\end{align*}
and
\begin{align*}
	\Psi(x,0) = -9x^2 + 576x \geq 0\; \mbox{and}\;
	\Psi(x,1) = -49x^2 + 800x - 256 \geq -256\; \text{for } x \in [0,4].
\end{align*}
Hence, we have
\begin{align*}
	T_{2,1}\!\left({F_f}/{\gamma}\right)
	\geq -\frac{256}{36864} = -\frac{1}{144}.
\end{align*}
The above inequality is sharp in case of the function $f_6$ defined in (\ref{f6}). This completes the proof.
\end{proof}
\section{{\bf Generalized Zalcman conjecture for the Class $\mathcal{S}_e^{\ast}$ and $\mathcal{C}_e$.}}\label{Sec-5}
In $1960$, Zalcman conjectured that if $f\in\mathcal{S}$ and is given by \eqref{eq1}, then $|a_n^2-a_{2n-1}|\leq (n-1)^2$ for $n\geq 2$ with equality only for the Koebe function $k(z)=z/(1-z)^2$, or its rotations, which implies the famous Bieberbach conjecture $|a_n|\leq n$ for $n\geq 2$. For $f\in\mathcal{S}$, Ma \cite{Ma-JMAA-1999} proposed a generalized Zalcman conjecture
\begin{align*}
	|a_na_m-a_{n+m-1}|\leq (n-1)(m-1)
\end{align*}
for $m\geq 2,\; n\geq 2$, which is still an open problem. However, Ma \cite{Ma-JMAA-1999} proved this generalized Zalcman conjecture for the classes $\mathcal{S}^*$ and $\mathcal{S}_{\mathbb{R}}$, where $\mathcal{S}_{\mathbb{R}}$ denotes the class of all functions in $\mathcal{S}$ with real coefficients. In $2017$, Ravichandran and Verma \cite{Ravichandran-JMAA-2019} proved the conjecture for starlike and convex functions of given order, and for the class of functions with bounded turning. In \cite{Allu-Pandey-JMAA-2020}, Allu and Pandey proved the Zalcman conjecture and the generalized Zalcman conjecture for the class $\mathcal{U}$ using extream point theory and also proved the generalized Zalcman conjecture for the class $\mathcal{CR}^+$ for the initial coefficients.\vspace{1.2mm}

In this paper, we prove two results (Theorem \ref{Th-5.1} and Theorem \ref{Th-5.2}) regarding the Generalized Zalcman Conjecture for the initial coefficients of functions belonging to the class $\mathcal{S}_e^{\ast}$ or $\mathcal{C}_e$.  It is worth noting that a bound for a subclass is expected to be smaller (tighter) than the conjectured bound for the entire class $\mathcal{S}$, as the subclass represents a more restricted set of functions.
\begin{theo}\label{Th-5.1}
Let $f(z) = z + a_2 z^2 + a_3 z^3 + \cdots \in \mathcal{S}_e^{\ast}$. Then
\begin{equation}\label{5.1}
|a_2a_3-a_4| \leq \frac{8}{9\sqrt 7}\approx 0.336.
\end{equation}
The inequality \eqref{5.1} is sharp.
\end{theo}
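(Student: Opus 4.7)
\smallskip
\noindent\textbf{Proof plan for Theorem \ref{Th-5.1}.}
My plan is to reduce the quantity $a_2a_3-a_4$ to a cubic expression in $c_1,c_2,c_3$ and then optimise it with the aid of Lemmas \ref{L1} and \ref{L2}. Substituting the coefficient formulas \eqref{a12}--\eqref{a14} and collecting terms, one finds
\begin{align*}
 a_2a_3-a_4 \;=\; \tfrac{5}{144}c_1^{3} + \tfrac{1}{12}c_1c_2 - \tfrac{1}{6}c_3
 \;=\; -\tfrac{1}{6}\Bigl(c_3 - \tfrac{1}{2}c_1c_2 - \tfrac{5}{24}c_1^{3}\Bigr).
\end{align*}
Note that this functional is \emph{not} of the form $c_3 - 2Bc_1c_2 + Dc_1^{3}$ covered by Lemma \ref{L4}, since the coefficient $D=-\tfrac{5}{24}$ lies below the threshold $B(2B-1)=-\tfrac{1}{8}$; consequently a direct use of Lemma \ref{L4} fails and a finer analysis is required.

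Next, by invariance of the class $\mathcal{S}_e^{\ast}$ under rotations $f(z)\mapsto e^{-i\theta}f(e^{i\theta}z)$, I may assume $c_1=t\in[0,2]$. Writing $c_2$ and $c_3$ through the parametrisation of Lemma \ref{L1} with parameters $\tau_1=t/2$, $\zeta:=\tau_2\in\overline{\mathbb{D}}$ and $\eta:=\tau_3\in\overline{\mathbb{D}}$, a short computation yields
\begin{align*}
 4\Bigl(c_3-\tfrac{1}{2}c_1c_2-\tfrac{5}{24}c_1^{3}\Bigr)
 \;=\; -\tfrac{5}{6}t^{3} + t(4-t^{2})\zeta(1-\zeta) + 2(4-t^{2})(1-|\zeta|^{2})\eta.
\end{align*}
Taking moduli and using $|\eta|\le 1$, I get an upper estimate of the form $|A+B\zeta+C\zeta^{2}|+2(4-t^{2})(1-|\zeta|^{2})$ with $A=-\tfrac{5}{6}t^{3}$, $B=t(4-t^{2})$, $C=-t(4-t^{2})$. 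Since $AC>0$, Lemma \ref{L2}(i) applies after dividing through by $2(4-t^{2})$, producing a piecewise bound according to whether $|B|/(2(4-t^2)) = t/2 \ge 2(1-t/2)=2-t$, i.e.\ $t\gtrless 4/3$.

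In the range $t\in[4/3,2]$ the Lemma \ref{L2} bound simplifies to $|c_3-\tfrac{1}{2}c_1c_2-\tfrac{5}{24}c_1^{3}|\le 2t-\tfrac{7}{24}t^{3}$, whose maximum on this interval, found by solving $2-\tfrac{7}{8}t^{2}=0$, occurs at $t=4/\sqrt{7}$ and equals $\tfrac{16}{3\sqrt{7}}$. In the complementary range $t\in[0,4/3]$ the Lemma \ref{L2} bound becomes $2-\tfrac{3}{8}t^{2}+\tfrac{13}{48}t^{3}$; a calculus check shows its critical point at $t=12/13$ is a local minimum, so the sup on this interval is attained at the endpoint $t=0$ with value $2<\tfrac{16}{3\sqrt{7}}$. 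Dividing by $6$ delivers $|a_2a_3-a_4|\le \tfrac{8}{9\sqrt{7}}$. The main obstacle is precisely this case-split combined with the calculus optimisation, since a naive triangle-inequality bound or direct appeal to Lemma \ref{L4} is too weak.

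Finally, for sharpness I trace back the extremisers in the above chain: equality forces $\zeta=-1$ (so that the $(1-|\zeta|^{2})$-term vanishes and the triangle inequality is tight), and $\tau_1=t/2=2/\sqrt{7}$. Substituting $\tau_1=2/\sqrt{7}$, $\tau_2=-1$ into the three-parameter formula for $p\in\mathcal{P}$ given in Lemma \ref{L1}, one obtains an explicit Carath\'eodory function $p$, and hence via \eqref{te1}--\eqref{te2} an explicit function $f\in\mathcal{S}_e^{\ast}$ for which $|a_2a_3-a_4|=\tfrac{8}{9\sqrt{7}}$, proving sharpness.
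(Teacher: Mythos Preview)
Your proposal is correct and follows essentially the same route as the paper: both reduce $a_2a_3-a_4$ to the cubic $\tfrac{1}{144}(5c_1^{3}+12c_1c_2-24c_3)$, substitute the parametrisation of Lemma~\ref{L1}, use $|\tau_3|\le 1$ and apply Lemma~\ref{L2}(i) with the identical case split (your threshold $t=4/3$ is the paper's $\tau_1=2/3$ since $t=2\tau_1$), and both locate the maximiser at $\tau_1=2/\sqrt{7}$, $\tau_2=-1$. The only cosmetic differences are your choice of variable $t=c_1$ versus the paper's $\tau_1=c_1/2$, and your explicit remark that Lemma~\ref{L4} fails here; for sharpness note that since $|\tau_2|=1$ you should invoke the two-parameter formula in Lemma~\ref{L1} rather than the three-parameter one.
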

\begin{proof} In view of \eqref{a12}-\eqref{a14}, we obtain
\begin{align}
	\label{z1} |a_2a_3-a_4|&=\bigg|\frac{1}{2}c_1\left(\frac{1}{16}c_1^2+\frac{1}{4}c_2\right)-\left(\frac{1}{24} c_1 c_2 - \frac{1}{288} c_1^3 + \frac{1}{6} c_3\right)\bigg|\nonumber\\
	&=\frac{1}{144} \left|5c_1^3+12c_1c_2-24c_3\right|.
\end{align}

Substituting the value of $c_1, c_2$ and $c_3$ in (\ref{z1}) we have 
\bea\label{z2} |a_2a_3-a_4|&=&\frac{1}{18}\left|5\tau_1^3-6(1-\tau_1^2)\tau_1\tau_2+6(1-\tau_1^2)\tau_1\tau_2^2-6(1-\tau_1^2)(1-|\tau_2|^2)\tau_3\right|\eea
We now divide the proof into the following cases:

\medskip
{\bf Case 1.} Let $\tau_1=1$. Then, from (\ref{z2}) we get
\[|a_2a_3-a_4|=\frac{5}{18}\approx 0.27777.\]

\medskip
{\bf Case 2.} Let $\tau_1=0$. Then, from (\ref{z2}) we have 
\[|a_2a_3-a_4|=\frac{1}{18}|6\tau_3|\leq \frac{1}{3}\approx 0.3333.\]

\medskip
{\bf Case 3.} Let $\tau_1 \in (0,1)$. Applying the triangle inequality in (\ref{z2}) and using the fact that $|\tau_3| \le 1$, we obtain
\bea\label{z3} |a_2a_3-a_4|&\leq &\frac{1}{18}\left(|5\tau_1^3-6(1-\tau_1^2)\tau_1\tau_2+6(1-\tau_1^2)\tau_1\tau_2^2|+|6(1-\tau_1^2)(1-|\tau_2|^2)\tau_3|\right)\nonumber\\
&\leq &\frac{1}{18}\left(|5\tau_1^3-6(1-\tau_1^2)\tau_1\tau_2+6(1-\tau_1^2)\tau_1\tau_2^2|+|6(1-\tau_1^2)(1-|\tau_2|^2)|\right)\nonumber\\
&=&\frac{1}{3}(1-\tau_1^2)\left(\bigg|\frac{5\tau_1^3}{6(1-\tau_1^3)}-\tau_1\tau_2+\tau_1\tau_2^2\bigg|+1-|\tau_2|^2\right)\nonumber\\
&=&\frac{1}{3}(1-\tau_1^2)(|A+B\tau_2+C\tau_2^2|-1-|\tau_2|)\nonumber\\
&=&\frac{1}{3}(1-\tau_1^2)Y(A,B,C), \eea
where $A=\frac{5\tau_1^3}{6(1-\tau_1^2)}$, $B=-\tau_1$ and $C=\tau_1$.\par
We note that $AC > 0$. Hence, we can apply case (i) of Lemma \ref{L2} and discuss the following cases.

A simple computation shows that $2(1-|C|)-|B|=2(1-\tau_1)-\tau_1=2-3\tau_1\leq 0$ when $\tau_1\geq \frac{2}{3}$, otherwise $2(1-|C|)-|B|>0$.\vspace{1.2mm}

\noindent{\bf Case A.} If $1>\tau_1\geq \frac{2}{3}$ then $2(1-|C|)\leq |B|$. Thus from Lemma \ref{L2}, we see that 
\begin{align*}
Y(A,B,C) &= |A| + |B| + |C| \\
&= \frac{5 \tau_1^3}{6(1-\tau_1^3)} + \tau_1 + \tau_1 \\
&= \frac{1}{6(1-\tau_1^3)}\big(12\tau_1 - 7\tau_1^3\big) \\
\end{align*}

In view of the inequality (\ref{z3}), it follows that
\begin{align*}
|a_2a_3-a_4|&\leq \frac{1}{3}(1-\tau_1^2)Y(A,B,C)\\
&=\frac{1}{18} \big(12\tau_1 - 7\tau_1^3\big)\\
&=\frac{1}{18}\psi_1(t)
\end{align*}
where $\psi_1(t) = 12 t - 7 t^3$ for $t \in \left[\frac{2}{3},1\right)$. \vspace{1.2mm}
  
A straightforward calculation shows that $\psi_1'(t) = 12 - 21 t^2$ and $\psi_1''(t) = -42 t < 0.$ 
The critical point is $t_0 = \frac{2}{\sqrt{7}}$. Since $\psi_1''(t) < 0$, $\psi(t)$ attains its maximum at $t_0$, so that $\psi_1(t_0) = \frac{16}{\sqrt{7}}.$ Therefore, we conclude that 
\begin{align*}
	|a_2a_3-a_4|\leq \frac{8}{9\sqrt 7}\approx 0.336.
\end{align*}
\noindent{\bf Case B.} If $0<\tau_1<\frac{2}{3}$, then $2(1-|C|)> |B|$. Thus, from Lemma \ref{L2}, we see that
\begin{align*}
Y(A,B,C) &= 1+|A| + \frac{B^2}{4(1-|C|)} \\
&= \frac{1}{12(1-\tau_1^2)}\big(12-9\tau_1^2 +13\tau_1^3\big)
\end{align*}
In view of the inequality (\ref{z3}), it follows that
\begin{align*}
|a_2a_3 - a_4| &\leq \frac{1}{3}(1-\tau_1^2)Y(A,B,C) \\
&= \frac{1}{36} \big( 12 - 9\tau_1^2 + 13\tau_1^3 \big) \\
&= \frac{1}{36} \psi_2(t),
\end{align*}
where $\psi_2(t) = 12 - 9t^2 + 13t^3$ for $t \in \left(0, \frac{2}{3}\right)$.\vspace{1.2mm}

A simple computation shows that $\psi_2'(t) = t(13t - 6)$.Since $\psi_2'(t) < 0$ for $t \in \left(0, \frac{6}{13}\right)$, the function $\psi_2$ is decreasing on this interval. Conversely, $\psi_2'(t) \geq 0$ for $t \in \left[\frac{6}{13}, \frac{2}{3}\right)$, meaning $\psi_2$ is increasing on $\left[\frac{6}{13}, \frac{2}{3}\right)$. Hence, we have
\begin{align*}
	\max_{t \in \left(0, \frac{2}{3}\right)} \{ \psi_2(t) \} = \max \{ \psi_2(0), \psi_2(\frac{2}{3}) \} = \max \left\{ 12, \frac{320}{27} \right\} = 12.
\end{align*}
Consequently, we have \[|a_2a_3-a_4|\leq \frac{1}{3}\approx 0.333.\]
From the preceding discussion, we conclude that
\begin{align*}
	|a_2a_3-a_4| \leq \frac{8}{9\sqrt 7}\approx 0.336
\end{align*}
which establishes the desired inequality.\vspace{1.2mm}

To show that this inequality is sharp, we consider the function \(f\) defined in (\ref{te1}) with
\begin{align*}
	p(z) = \frac{1+z-z^2-z^3}{1+(1-2t_0)z+(1-2t_0)z^2+z^2}
\end{align*}
where $t_0=\frac{2}{\sqrt{7}}$.
\end{proof}
\begin{theo}\label{Th-5.2}
Let $f(z) = z + a_2 z^2 + a_3 z^3 + \cdots \in \mathcal{C}_e$. Then
\begin{equation}\label{z21}
|a_2a_3-a_4| \leq \frac{1}{12}.
\end{equation}
The inequality \eqref{z21} is sharp.
\end{theo}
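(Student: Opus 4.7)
The strategy is to mirror the proof of Theorem \ref{Th-5.1}, adapted to the coefficient formulas \eqref{a22}--\eqref{a24} for the class $\mathcal{C}_e$. The plan is to first combine those formulas to obtain
\begin{align*}
a_2 a_3 - a_4 = \tfrac{1}{4}c_1\bigl(\tfrac{1}{12}c_2 + \tfrac{1}{48}c_1^2\bigr) - \bigl(\tfrac{1}{96}c_1 c_2 - \tfrac{1}{1152}c_1^3 + \tfrac{1}{24}c_3\bigr) = \tfrac{1}{1152}\bigl(7c_1^3 + 12 c_1 c_2 - 48 c_3\bigr),
\end{align*}
then substitute the parametrization of Lemma \ref{L1} and use the rotation $p(z)\mapsto p(e^{i\theta}z)$ to reduce to $\tau_1 \in [0,1]$. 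A direct expansion should yield
\begin{align*}
|a_2 a_3 - a_4| = \tfrac{1}{144}\bigl|\tau_1^3 - 18\tau_1(1-\tau_1^2)\tau_2 + 12\tau_1(1-\tau_1^2)\tau_2^2 - 12(1-\tau_1^2)(1-|\tau_2|^2)\tau_3\bigr|.
\end{align*}

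Next I would split the analysis into three cases according to the value of $\tau_1$. At $\tau_1 = 1$ the right-hand side collapses to $1/144$; at $\tau_1 = 0$ it reduces to $\tfrac{1}{12}(1-|\tau_2|^2)|\tau_3| \leq \tfrac{1}{12}$, with equality precisely when $\tau_2 = 0$ and $|\tau_3| = 1$, which already realizes the target bound and identifies the extremal function. For the generic case $\tau_1 \in (0,1)$, I would apply the triangle inequality with $|\tau_3| \leq 1$ and factor out $12(1-\tau_1^2)$ from the first three terms to obtain
\begin{align*}
|a_2 a_3 - a_4| \leq \tfrac{1-\tau_1^2}{12}\,Y(A,B,C), \qquad A = \tfrac{\tau_1^3}{12(1-\tau_1^2)},\ \ B = -\tfrac{3\tau_1}{2},\ \ C = \tau_1.
\end{align*}
Since $AC \geq 0$, part (i) of Lemma \ref{L2} applies; the criterion $|B| \geq 2(1-|C|)$ becomes $\tau_1 \geq 4/7$, splitting the remaining work into two sub-cases.

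For $\tau_1 \in [4/7,1)$ one uses $Y(A,B,C) = |A|+|B|+|C|$, which after simplification reduces the bound to $\tfrac{1}{144}\tau_1(30-29\tau_1^2)$; single-variable calculus maximizes this on the interval at $\tau_1^2 = 10/29$, giving $\tfrac{5}{36}\sqrt{10/29} < \tfrac{1}{12}$. For $\tau_1 \in (0,4/7)$ one uses $Y(A,B,C) = 1 + |A| + \tfrac{B^2}{4(1-|C|)}$, and clearing denominators reduces the desired inequality to the polynomial statement $\tau_1^2(31\tau_1 - 21) \leq 0$, which holds automatically on $[0,4/7]$ because $31\cdot(4/7) - 21 = -23/7 < 0$. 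Both sub-cases therefore stay strictly below $\tfrac{1}{12}$, confirming that the overall maximum $\tfrac{1}{12}$ is attained only through the degenerate case $\tau_1 = 0$. Sharpness will be verified by the function $f_7 \in \mathcal{C}_e$ induced by $p(z) = (1+z^3)/(1-z^3)$, for which $a_2 = a_3 = 0$ and $a_4 = \tfrac{1}{12}$.

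The principal obstacle is bookkeeping rather than anything deep: correctly identifying $A$, $B$, $C$ after the triangle inequality, distinguishing the two branches of Lemma \ref{L2}(i), and then carrying both one-variable maximizations through without error. The key simplification is recognizing that Case B collapses to the cubic inequality $\tau_1^2(31\tau_1 - 21) \leq 0$, which needs no delicate optimization and funnels all the extremal behaviour back to the degenerate $\tau_1 = 0$ case.
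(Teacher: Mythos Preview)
Your proposal is correct and follows essentially the same route as the paper: the same reduction $a_2a_3-a_4=\tfrac{1}{1152}(7c_1^3+12c_1c_2-48c_3)$, the same parametrization via Lemma~\ref{L1}, the same identification of $A,B,C$ and application of Lemma~\ref{L2}(i) with the branch split at $\tau_1=4/7$, and the same extremal function $f_7$. The only difference is cosmetic: in the sub-case $\tau_1\in(0,4/7)$ you collapse the estimate directly to the cubic inequality $\tau_1^2(31\tau_1-21)\le 0$, whereas the paper phrases the same computation as maximizing $\psi_4(\tau_1)=192-84\tau_1^2+124\tau_1^3$ on $(0,4/7)$ and checking that the maximum value is $192$.
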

\begin{proof}
From the preceding discussion, we have
\begin{align}
\label{z22}
|a_2a_3 - a_4| &= \bigg| \frac{1}{4} c_1 \left( \frac{1}{12} c_2 + \frac{1}{48} c_1^2 \right) - \left( \frac{1}{96} c_1 c_2 - \frac{1}{1152} c_1^3 + \frac{1}{24} c_3 \right) \bigg| \nonumber \\
&= \frac{1}{1152} \left| 7 c_1^3 + 12 c_1 c_2 - 48 c_3 \right|.
\end{align}

Substituting the expressions for $c_1$, $c_2$, and $c_3$ in \eqref{z22} gives
\begin{align}
\label{z23}
|a_2a_3 - a_4| = \frac{1}{144} \big| \tau_1^3 - 18 (1-\tau_1^2) \tau_1 \tau_2 + 12 (1-\tau_1^2) \tau_1 \tau_2^2 - 12 (1-\tau_1^2)(1-|\tau_2|^2) \tau_3 \big|.
\end{align}

To complete the proof, we now consider the following cases:

\medskip
\noindent
\textbf{Case 1.} If $\tau_1 = 1$, then from \eqref{z23} we obtain
\[
|a_2a_3 - a_4| = \frac{1}{144} \approx 0.006944\ldots
\]

\medskip
\noindent
\textbf{Case 2.} If $\tau_1 = 0$, then
\[
|a_2a_3 - a_4| = \frac{1}{144} |12 \tau_3| \le \frac{1}{12} \approx 0.08333\ldots
\]

\medskip
\noindent
\textbf{Case 3.} If $\tau_1 \in (0,1)$, applying the triangle inequality to \eqref{z23} and using $|\tau_3| \le 1$, we have
\begin{align}
\label{z24}
|a_2a_3 - a_4| &\le \frac{1}{144} \Big( \big| \tau_1^3 - 18 (1-\tau_1^2) \tau_1 \tau_2 + 12 (1-\tau_1^2) \tau_1 \tau_2^2 \big| + 12 (1-\tau_1^2)(1-|\tau_2|^2) \Big) \nonumber\\
&= \frac{1}{12} (1-\tau_1^2) \Big( \big| \frac{\tau_1^3}{12(1-\tau_1^2)} - \frac{3}{2} \tau_1 \tau_2 + \tau_1 \tau_2^2 \big| + 1 - |\tau_2|^2 \Big) \nonumber\\
&= \frac{1}{12} (1-\tau_1^2) Y(A,B,C),
\end{align}
where
\[
A = \frac{\tau_1^3}{12(1-\tau_1^2)}, \quad B = -\frac{3}{2} \tau_1, \quad C = \tau_1.
\]

Since $AC > 0$, we can apply case (i) of Lemma \ref{L2}. A direct computation shows that
\[
2(1-|C|)-|B| = 2(1-\tau_1) - \frac{3}{2}\tau_1 = 2 - \frac{7}{2}\tau_1.
\]

Thus, for $\tau_1 \ge \frac{4}{7}$, we have $2(1-|C|) \le |B|$, and Lemma \ref{L2} gives
\begin{align*}
Y(A,B,C) &= |A| + |B| + |C| \\
&= \frac{\tau_1^3}{12(1-\tau_1^2)} + \frac{3}{2}\tau_1 + \tau_1 \\
&= \frac{1}{12(1-\tau_1^2)} \big( 30 \tau_1 - 29 \tau_1^3 \big).
\end{align*}

Consequently, from \eqref{z24}, it is straightforward to show that
\begin{align*}
	|a_2a_3 - a_4| \le \frac{1}{12} (30\tau_1 - 29\tau_1^3) = \frac{1}{12} \psi_3(\tau_1), \quad \tau_1 \in \left[ \frac{4}{7}, 1 \right),
\end{align*}
where $\psi_3(\tau_1) = 30\tau_1 - 29\tau_1^3$. Computing the derivative,
\[
\psi_3'(\tau_1) = 30 - 87 \tau_1^2, \quad \psi_3''(\tau_1) = -174 < 0,
\]
gives the critical point $\tau_1 = \sqrt{10/29} \in [4/7,1)$. Since $\psi_3'' < 0$, $\psi_3$ attains its maximum at $\tau_1 = \sqrt{10/29}$, yielding
\[
|a_2a_3 - a_4| \le \frac{5}{36} \sqrt{\frac{10}{29}} \approx 0.0815.
\]

For $0 < \tau_1 < 4/7$, we have $2(1-|C|) > |B|$. Then Lemma \ref{L2} gives
\begin{align*}
Y(A,B,C) &= 1 + |A| + \frac{B^2}{4(1-|C|)} \\
&= \frac{1}{192(1-\tau_1^2)} \big( 124 \tau_1^3 - 84 \tau_1^2 + 192 \big).
\end{align*}

Thus, we see that
\begin{align*}
	|a_2a_3 - a_4| &\le \frac{1}{12} (1-\tau_1^2) Y(A,B,C)\\& = \frac{1}{2304} \big( 124 \tau_1^3 - 84 \tau_1^2 + 192 \big)\\& = \frac{1}{2304} \psi_4(\tau_1),
\end{align*}
where $\psi_4(\tau_1) := 192 - 84 \tau_1^2 + 124 \tau_1^3$ for $\tau_1 \in \left(0, \frac{4}{7} \right)$.\vspace{1.2mm}  

A simple calculation yields $\psi_4'(\tau_1) = 12 \tau_1(-14 + 31 \tau_1)$.Since $\psi_4'(\tau_1) < 0$ for $\tau_1 \in (0, 14/31)$, the function $\psi_4$ is decreasing on $(0, 14/31)$. Conversely, $\psi_4'(\tau_1) \geq 0$ for $\tau_1 \in [14/31, 4/7)$, meaning $\psi_4$ is increasing on $[14/31, 4/7)$. Hence,
\begin{align*}
	\max_{\tau_1 \in (0,4/7)} \psi_4(\tau_1) = \max\{ \psi_4(0), \psi_4(4/7) \} = \max \{192, 64288/343\} = 192.
\end{align*}

Combining all the above cases, we get the desired inequality
\begin{align*}
	|a_2a_3 - a_4| \le \frac{1}{12} \approx 0.08333.
\end{align*}

To show the bound is sharp, we consider the function $f$ defined in \eqref{te1} with
\begin{align*}
	p(z) = \frac{1+z^3}{1-z^3}
\end{align*}
and $f$ is of the form 
\[f(z)=z+\frac{1}{12}z^4+\cdots.\]
It is easy to see that equality
\begin{equation*}\label{z21}
	|a_2a_3-a_4|=\frac{1}{12}
\end{equation*}
holds for the $f$. This completes the proof.
\end{proof}
\section{{\bf  Generalized Fekete-Szeg\"{o} functional for the classes $\mathcal{S}_e^{\ast}$ and $\mathcal{C}_e$.}}\label{Sec-6}
In 2024, Lecko and Partyka \cite{Lecko2024} investigated the generalized Fekete-Szeg\"{o} functional for the class $\mathcal{S}$ defined by  
\[
F_{\lambda, \mu}(f) := \big| a_3(f) - \lambda a_2(f)^2 \big| - \mu |a_2(f)|,
\]
where $\lambda \in \mathbb{C}$ and $\mu > 0$.  
The coefficients $a_2(f) = a_2$ and $a_3(f) = a_3$ are given by (1).  
Hence, we can write
\bea\label{FG}
F_{\lambda, \mu}(f) = \big| a_3 - \lambda a_2^2 \big| - \mu |a_2|, 
\qquad \lambda \in \mathbb{C}, \ \mu > 0. 
\eea
In this section, our aim is to establish the sharp upper and lower bounds for $F_{\lambda, \mu}(f)$ on the classes $\mathcal{S}_e^{\ast}$ and $\mathcal{C}_e$. The proof relies on the lemma presented below.\vspace{1.2mm}

\begin{theo}
Let $f(z) = z + a_2 z^2 + a_3 z^3 + \cdots \in \mathcal{S}_e^*$. Then
\begin{equation}\label{fg1}
B_1\leq F_{\lambda, \mu}(f)\leq
\begin{cases} 
\frac{1}{4}\big(|3 - 4\lambda| - 4\mu\big), & \text{if } |3 - 4\lambda|\geq 2+4\mu,\\[2mm]
\frac{1}{2}, & \text{if } |3 - 4\lambda|< 2+4\mu.
\end{cases}
\end{equation}
where 
\[
B_1 =
\begin{cases}
-\frac{1}{4}(4\mu-|3 - 4\lambda|), & \text{if } \frac{\mu+1}{2}\geq |3 - 4\lambda|, \\[1.2em]
-\mu \sqrt{\dfrac{2}{|3 - 4\lambda| + 2}}, & \text{if } |3 - 4\lambda|\geq \frac{\mu^2+1}{2}, \\[1.2em]
-\dfrac{|3 - 4\lambda| + 16\mu^2 + 16}{2(|3 - 4\lambda| + 2)}, & \text{if } \frac{\mu+1}{2}<|3 - 4\lambda|< \frac{\mu^2+1}{2}.
\end{cases}
\]
The inequalities in \eqref{fg1} are sharp.
\end{theo}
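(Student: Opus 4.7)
The plan is to recognize that the functional $F_{\lambda,\mu}(f)$ fits exactly into the template of Lemma \ref{Ls} after substituting the Carathéodory coefficients, and then to read off the four stated bounds from that lemma. First, using \eqref{a12} and \eqref{a13}, I would write
\begin{align*}
a_3-\lambda a_2^2=\frac{1-4\lambda}{16}\,c_1^2+\frac{1}{4}\,c_2,\qquad \mu|a_2|=\frac{\mu}{2}\,|c_1|,
\end{align*}
so that $F_{\lambda,\mu}(f)=\Phi(c_1,c_2)$ in the notation of Lemma \ref{Ls} with the identifications
\begin{align*}
K=\frac{1-4\lambda}{16},\qquad L=\frac{1}{4},\qquad J=\frac{\mu}{2}.
\end{align*}
A direct computation then gives the auxiliary quantities $M=|4K+2L|=|3-4\lambda|/4$, $|2K+L|=|3-4\lambda|/8$, $|L|+J=(1+2\mu)/4$, and $2|L|=1/2$. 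Once these are in hand, everything reduces to translating the conditions and bounds of Lemma \ref{Ls} into conditions on $|3-4\lambda|$ and $\mu$.

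For the upper estimate, the threshold $|2K+L|\ge|L|+J$ becomes $|3-4\lambda|\ge 2+4\mu$, in which case Lemma \ref{Ls} yields $\Phi\le|4K+2L|-2J=(|3-4\lambda|-4\mu)/4$; otherwise it gives $\Phi\le 2|L|=1/2$. These are exactly the two branches displayed in \eqref{fg1}. For the lower estimate, one substitutes the same data into the three-branch lower bound of Lemma \ref{Ls} and matches each branch — \emph{viz.}\ $J\ge M+2|L|$, $J^2\le 2|L|(M+2|L|)$, and the complementary region — against the three defining conditions for $B_1$. In each branch the explicit bound $2J-M$, $2J\sqrt{2|L|/(M+2|L|)}$, or $2|L|+J^2/(M+2|L|)$ simplifies to the stated closed form in $|3-4\lambda|$ and $\mu$.

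The sharpness statements will be handled by exhibiting extremal Carathéodory functions that saturate the three cases, mirroring the choices already used earlier in the paper. For the maximum $1/2$ I would take $p(z)=(1+z^2)/(1-z^2)$, so that $c_1=0$ and $c_2=2$, corresponding to $f_2$ in \eqref{f2}; for the maximum $(|3-4\lambda|-4\mu)/4$ I would take a rotation $p(z)=(1+\eta z)/(1-\eta z)$ of $f_1$ with $\eta\in\mathbb{T}$ chosen so that $(1-4\lambda)\eta^2$ is real and positive and $\eta\mu$ has the correct sign, forcing $c_1=2\eta$ and $c_2=2\eta^2$ into perfect alignment. The lower extremals are produced the same way by reversing the alignment, and for the transitional third case one uses a function of the form built from $\tau_1,\tau_2\in\overline{\mathbb{D}}$ (via Lemma \ref{L1}) with $\tau_1$ tuned to the critical value that equalises the two terms in $\Phi(c_1,c_2)$.

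The main obstacle I anticipate is bookkeeping rather than a conceptual difficulty: one has to check carefully that the substitutions $K,L,J$ into Lemma \ref{Ls} produce exactly the three conditions displayed in the definition of $B_1$, and then verify for each of the three lower-bound cases that the candidate extremal function actually realises equality. In particular, in the mixed case $\tfrac{\mu+1}{2}<|3-4\lambda|<\tfrac{\mu^2+1}{2}$ the equality case of Lemma \ref{Ls} forces a specific relation between $|c_1|$ and $|c_2|$, and one must choose the phase of the Schwarz function in \eqref{te2} so that this relation together with the required phase alignment of $(1-4\lambda)c_1^2$ with $c_2$ are simultaneously achievable; this is where the complex nature of $\lambda$ forces the use of rotations $z\mapsto\eta z$ rather than the canonical $p$'s used in earlier theorems.
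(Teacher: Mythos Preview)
Your proposal is correct and follows essentially the same approach as the paper: both reduce $F_{\lambda,\mu}(f)$ via \eqref{a12}--\eqref{a13} to the form $\Phi(c_1,c_2)=|Kc_1^2+Lc_2|-|Jc_1|$ and then read off the upper and lower bounds directly from Lemma~\ref{Ls}, with sharpness handled by $f_1$, $f_2$, and the two-parameter Carath\'eodory functions of Lemma~\ref{L1}. The only cosmetic difference is that the paper factors out $1/16$ (so it works with $K=1-4\lambda$, $L=4$, $J=8\mu$) whereas you keep the unnormalized $K=(1-4\lambda)/16$, $L=1/4$, $J=\mu/2$; the resulting conditions and bounds are identical.
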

\begin{proof} Given that $f \in \mathcal{S}_e^{\ast}$, using (\ref{a22}), (\ref{a23}), and (\ref{FG}), one obtains
\begin{align}\label{fg2} F_{\lambda, \mu}(f)&=\big|a_3 - \lambda a_2^2\big| - \mu|a_2|\nonumber\\
&= \left|\frac{1}{4}c_2 + \left(\frac{1}{16} - \frac{\lambda}{4}\right)c_1^2\right|
- \frac{\mu}{2}|c_1|\nonumber\\
&= \frac{1}{16} (|4c_2+(1-4\lambda)c_1|-|8\mu c_1|\nonumber\\
&=\frac{1}{16}\Phi(p_1,p_2)
\end{align}
where $\Phi(p_1,p_2)=|Kp_1^2+Lp_2|-|Jp_1|$, with $K=(1-4\lambda)$, $L=4$,  $J=8\mu$ and $M=|4K+2L|=4|3-4\lambda|$.\par
Since 
\[|2K + L| - |L| - J = 2\big(|3 - 4\lambda| - 2 - 4\mu\big)
\]
The condition $|3 - 4\lambda|\geq 2+4\mu$ implies $|2K + L| \geq |L| +J$. Thus, Lemma \ref{Ls} yields
\[\Phi(p_1,p_2)\leq |4K + 2L| - 2J=4\big(|3 - 4\lambda| - 4\mu\big).\]
Thus, from $(\ref{fg2})$, we obtain the desired inequality
\[ F_{\lambda, \mu}(f)\leq \frac{1}{4}\big(|3 - 4\lambda| - 4\mu\big).\]
The inequality is sharp for the function $f_1$ defined in \eqref{f1}.\vspace{1.2mm}

Similarly, for $|3 - 4\lambda|< 2+4\mu$, we have $|2K + L| < |L| +J$. Now, Lemma \ref{Ls} give
\[\Phi(p_1,p_2)\leq 2|L|=8.\]
 In light of $(\ref{fg2})$, it is easy to see that
\[ F_{\lambda, \mu}(f)\leq \frac{1}{2}.\]
The inequality is sharp for the function $f_2$ defined in \eqref{f2}.\vspace{1.2mm}

Next, we find the lower bound for $F_{\lambda, \mu}(f)$. Let$$J - M - 2|L| = 8\mu - 4|3 - 4\lambda| - 8:=g_2(\lambda).$$The inequality $g_2(\lambda)\geq 0$ is equivalent to$$8\mu - 4|3 - 4\lambda| - 8\geq 0,$$which holds when $\frac{\mu+1}{2}\geq |3 - 4\lambda|$.\vspace{1.2mm}

For $\frac{\mu+1}{2}\geq |3 - 4\lambda|$, we have $J \geq M +2|L|$. Thus, from Lemma \ref{Ls}, we have$$-\Phi(p_1,p_2)\leq 16\mu - 4|3 - 4\lambda|.$$Therefore, from $(\ref{fg2})$, we obtain$$F_{\lambda, \mu}(f) \geq -\frac{1}{4}(4\mu-|3 - 4\lambda|).$$To show that the inequality is sharp, consider the function $f_1$ defined in $\eqref{f1}$.

Moreover, we see that
\[ J^2 - 2|L|(M + 2|L|) = 64\mu^2 - 32|3 - 4\lambda| - 64\leq 0.\]
This yields $|3 - 4\lambda|\geq \frac{\mu^2+1}{2}$, from which Lemma \ref{Ls} provides
\[-\Phi(p_1,p_2)\leq 16\mu \sqrt{\dfrac{2}{|3 - 4\lambda| + 2}}.\]
Thus, it follows from $(\ref{fg2})$ that
\[F_{\lambda, \mu}(f) \geq -\mu \sqrt{\dfrac{2}{|3 - 4\lambda| + 2}}.\]

To show sharpness of the inequality, consider the function $f_7$ defined by (\ref{te1}) with
\[
p(z) = \frac{1 + (t_1 t_2 + t_1)z + t_2 z^2}{1 + (t_1 t_2 - t_1)z - t_2 z^2},
\]
where
\[
t_1 = \sqrt{\frac{2|L|}{M + 2|L|}}, \qquad t_2 = -\frac{|L|(4K + 2L)}{L|4K + 2L|}.
\]
Then, $q_1 = 2t_1$ and $ q_2 = 2t_2^2 + 2(1 - t_1^2)t_2,$
and it gives
\[
|Kq_1^2 + Lq_2| = \frac{|L|(4K + 2L) - |L|(4K + 2L)}{|2K + L| + L} = 0.
\]
Thus,
\[
\Phi(q_1, q_2) = |Kq_1^2 + Lq_2| - |Jq_1| = -2Jt_1 = -2J \sqrt{\frac{2|L|}{M + 2|L|}}.
\]
Therefore from (\ref{fg2}) we have 
\begin{align*}
	F_{\lambda, \mu}(f) = -\mu \sqrt{\dfrac{2}{|3 - 4\lambda| + 2}}
	\end{align*}
Finally, in the range $\frac{\mu+1}{2}<|3 - 4\lambda|< \frac{\mu^2+1}{2}$, Lemma \ref{Ls} implies that
\[ -\Phi(p_1,p_2)\leq 2|L| + \dfrac{J^2}{M + 2|L|} 
= \dfrac{8|3 - 4\lambda| + 16\mu^2 + 16}{|3 - 4\lambda| + 2},\]
\begin{align*}
	F_{\lambda, \mu}(f) \geq -\dfrac{|3 - 4\lambda| + 16\mu^2 + 16}{2(|3 - 4\lambda| + 2)}.
\end{align*}
To show sharpness of above inequality, we consider the function $f_8$ defined by (\ref{te1}) with
\[
p(z) = \frac{1 + (t_1 t_2 + t_1)z + t_2 z^2}{1 + (t_1 t_2 - t_1)z - t_2 z^2},
\]
where
\[
t_1 = \frac{J}{M + 2|L|}\; \mbox{and}\; t_2 = -\frac{|L|(4K + 2L)}{L|4K + 2L|}.
\]
Then, $q_1 = 2t_1$ and $ q_2 = 2t_2^2 + 2(1 - t_1^2)t_2,$
and we have
\begin{align*}
|Kq_1^2 + Lq_2|^2 &= |(4K + 2L)t_1^2 + 2L(1 - t_1^2)t_2|^2\\
&= M^2 t_1^4 + 4\,\mathrm{Re}\big(L\overline{(4K + 2L)}t_1^2(1 - t_1^2)t_2\big)+ 4L^2(1 - t_1^2)^2\\
&= M^2 t_1^4 - 4t_1^2(1 - t_1^2)M|L| + 4L^2(1 - t_1^2)^2\\
&= \left(Mt_1^2 - 2|L|(1 - t_1^2)\right)^2\\
&= \left(\frac{J^2}{M + 2|L|} - 2|L|\right)^2.
\end{align*}

Since $J^2 > 2|L|(M + 2|L|)$, we get
\[
|Kq_1^2 + Lq_2| = \frac{J^2}{M + 2|L|} - 2|L|,
\]
which implies
\begin{align}\label{hhh1}
\Phi(q_1, q_2) = |Kq_1^2 + Lq_2| - |Jq_1| 
= -\left( 2|L| + \frac{J^2}{M + 2|L|} \right).
\end{align}

Thus, from (\ref{fg2}) and  (\ref{hhh1}), we have 
\[F_{\lambda, \mu}(f) = -\dfrac{|3 - 4\lambda| + 16\mu^2 + 16}{2(|3 - 4\lambda| + 2)}.\]

This completes the proof.
\end{proof}
\begin{theo}
Let $f(z) = z + a_2 z^2 + a_3 z^3 + \cdots \in \mathcal{C}_e$. Then
\begin{equation}\label{gf1}
B_1\leq F_{\lambda, \mu}(f)\leq
\begin{cases} 
\frac{1}{4}(|1 - \lambda| - 2\mu), & \text{if } |1 - \lambda| \geq \frac{2}{3}(2 + 3\mu),\\[2mm]
\frac{1}{6}, & \text{if } |1 - \lambda| < \frac{2}{3}(2 + 3\mu).
\end{cases}
\end{equation}
where 
\[
B_2 =
\begin{cases}
-\dfrac{2\mu - |1 - \lambda|}{4}, & \text{if } \dfrac{3\mu - 2}{3} \ge |1 - \lambda|, \\[1.2em]
-\dfrac{1}{2}\mu \sqrt{\dfrac{2}{3|1 - \lambda| + 2}}, & \text{if } \dfrac{9\mu^2 - 4}{6} \le |1 - \lambda|, \\[1.2em]
-\dfrac{9\mu^2 + 6|1 - \lambda| + 4}{12(3|1 - \lambda| + 2)}, & \text{if } \dfrac{9\mu^2 - 4}{6} > |1 - \lambda| > \dfrac{3\mu - 2}{3}.
\end{cases}
\]
The inequalities in \eqref{gf1} are sharp.
\end{theo}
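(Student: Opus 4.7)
The plan is to mirror exactly the route taken in the preceding theorem for $\mathcal{S}_e^{\ast}$, adapted to the convex class $\mathcal{C}_e$ by means of the coefficient formulas (\ref{a22})--(\ref{a23}). Substituting $a_2 = \frac{1}{4}c_1$ and $a_3 = \frac{1}{12}c_2 + \frac{1}{48}c_1^2$ into $F_{\lambda,\mu}(f)$ and clearing denominators, one rewrites the functional in the form
\begin{align*}
F_{\lambda,\mu}(f) = \frac{1}{48}\,\Phi(c_1,c_2), \qquad \Phi(p_1,p_2) = |K p_1^2 + L p_2| - |J p_1|,
\end{align*}
with $K = 1 - 3\lambda$, $L = 4$, and $J = 12\mu$. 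A direct calculation then gives $M := |4K + 2L| = 12|1-\lambda|$ and $|2K + L| = 6|1-\lambda|$, so every piecewise threshold appearing in the statement will arise as a reformulation, in terms of $|1-\lambda|$ and $\mu$, of a switching condition from Lemma \ref{Ls}.

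With this reduction in hand, the upper bound (\ref{gf1}) is immediate from the two-case part of Lemma \ref{Ls}: when $|2K+L| \ge |L| + J$ one gets $\Phi(c_1,c_2) \le M - 2J = 12|1-\lambda| - 24\mu$, otherwise $\Phi(c_1,c_2) \le 2|L| = 8$, and division by $48$ produces the two stated upper bounds. The lower bound comes from the three-case half of the same lemma: the regions $J \ge M + 2|L|$, $J^2 \le 2|L|(M+2|L|)$, and the complementary region rewrite respectively as $\frac{3\mu-2}{3} \ge |1-\lambda|$, $\frac{9\mu^2-4}{6} \le |1-\lambda|$, and the intermediate range, while the corresponding bounds $2J - M$, $2J\sqrt{2|L|/(M+2|L|)}$, and $2|L| + J^2/(M+2|L|)$ reduce, after scaling by $\tfrac{1}{48}$, to the three stated formulas for $B_2$.

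For sharpness, the two upper bounds are realized by the functions $f_5$ in (\ref{f5}) (corresponding to $c_1=c_2=2$) and $f_6$ in (\ref{f6}) (corresponding to $c_1=0$, $c_2=2$), exactly as in the analogous step of the $\mathcal{S}_e^{\ast}$ theorem; the first lower case is also attained by $f_5$. For the remaining two lower-bound regions, I would construct extremal Carath\'eodory functions via the two-parameter formula of Lemma \ref{L1} with the parameter choices $\tau_1 = \sqrt{2|L|/(M+2|L|)}$ (second region) and $\tau_1 = J/(M+2|L|)$ (third region), together with the common phase choice $\tau_2 = -|L|(4K+2L)/(L\,|4K+2L|)$, in direct parallel with the functions $f_7$ and $f_8$ built in the $\mathcal{S}_e^{\ast}$ proof. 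The underlying identity $|Kq_1^2+L q_2| = \bigl|M t_1^2 - 2|L|(1-t_1^2)\bigr|$ carries over verbatim with the new values of $K, L, J$, so the equality in each region is automatic.

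The main obstacle is organizational rather than conceptual: one must carry five piecewise cases simultaneously, verifying both that each stated threshold on $|1-\lambda|$ and $\mu$ agrees with the corresponding condition in Lemma \ref{Ls} under the substitution $M=12|1-\lambda|$, $|L|=4$, $J=12\mu$, and that every extremal Carath\'eodory function produced from Lemma \ref{L1} actually yields a function in $\mathcal{C}_e$ via the subordination relation (\ref{e1}). No lemma beyond those assembled in Section \ref{Sec-2} is needed.
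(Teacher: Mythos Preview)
Your proposal is correct and follows essentially the same route as the paper: the identical reduction $F_{\lambda,\mu}(f)=\tfrac{1}{48}\Phi(c_1,c_2)$ with $K=1-3\lambda$, $L=4$, $J=12\mu$, $M=12|1-\lambda|$, followed by the five-case application of Lemma~\ref{Ls}, and sharpness via $f_5$, $f_6$, and the two-parameter Carath\'eodory functions from Lemma~\ref{L1} (the paper names these $f_9$ and $f_{10}$, built from (\ref{e1}) rather than (\ref{te1})). Your assignment of $f_5$ (with $c_1=c_2=2$) to the first upper and first lower cases and $f_6$ (with $c_1=0$) to the $\tfrac{1}{6}$ case is in fact the correct pairing; the paper's text swaps these labels.
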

\begin{proof}
Since $f \in \mathcal{C}_e$, in view of (\ref{a12}), (\ref{a13}), and (\ref{FG}), we obtain
\begin{align}\label{gfe1}
F_{\lambda,\mu}(f)
&= \left| \frac{1}{12} c_2
+ \left( \frac{1}{48} - \frac{\lambda}{16} \right) c_1^2 \right|
- \frac{\mu}{4} |c_1| \nonumber\\
&= \frac{1}{48} \big|(1-3\lambda)c_1^2 + 4 c_2\big| - |12 \mu c_1| \nonumber\\
&= \frac{1}{48} \, \Phi(p_1, p_2),
\end{align}
where $K = 1 - 3\lambda$, $L = 4$, and $J = 12 \mu$.  
Also, we easily conclude that 
\[
M = |4K + 2L| = 12 |1 - \lambda|.
\]

For the lower bound, we have
\[
|2K + L| - (|L| + J) = 6|1 - \lambda| - (4 + 12\mu).
\]

If $|1 - \lambda| \geq \frac{2}{3}(2 + 3\mu)$, then $|2K + L| \geq |L| + J$. Hence, from Lemma \ref{Ls}, we deduce that
\[
\Phi(p_1, p_2) \leq |4K + 2L| - 2J = 12|1 - \lambda| - 24\mu.
\]
Thus, it follows from $(\ref{gfe1})$ that
\[F_{\lambda,\mu}(f) \leq \frac{1}{4}(|1 - \lambda| - 2\mu).\]
The inequality is sharp for the function $f_6$ defined in (\ref{f6}).\vspace{1.2mm}

If $|1 - \lambda| < \frac{2}{3}(2 + 3\mu)$, then $|2K + L| < |L| + J$. Hence, from Lemma \ref{Ls}, we deduce that
\[
\Phi(p_1, p_2) \leq 2|L| = 8.
\]
Hence, from $(\ref{gfe1})$, we obtain the desired inequality

\[F_{\lambda,\mu}(f) \leq \frac{1}{6}.\]

The inequality is sharp for the function $f_5$ defined in (\ref{f5}). Note that 
\[
\begin{aligned}
J - (M + 2|L|) &= 12\mu - 12|1 - \lambda| - 8,\\[2mm]
J^2 - 2|L|(M + 2|L|) &= 144\mu^2 - 96|1 - \lambda| - 64.
\end{aligned}
\]
If $\frac{3\mu-2}{3}\geq |1 - \lambda|$, then $J \geq M + 2|L|$. Hence, by applying Lemma \ref{Ls}, we obtain that
\[-\Phi(p_1,p_2)\leq 24\mu-12|1-\lambda|.\]
Therefore, from $(\ref{gfe1})$, we have 
\[F_{\lambda,\mu}(f)\geq -\dfrac{2\mu-|1-\lambda|}{4}.\]

The inequality is sharp for the function $f_6$ defined in (\ref{f6}).

If $\frac{9\mu^2-4}{6}\leq |1 - \lambda|$, then the conditions $J^2 \leq 2|L|(M + 2|L|)$ and $J \not\geq M + 2|L|$ hold. Therefore, by Lemma \ref{Ls}, we deduce that
\[-\Phi(p_1,p_2)\leq 24\mu \sqrt{\frac{2}{3|1 - \lambda| + 2}}.\]

Thus from (\ref{gfe1}) we have 
\[F_{\lambda,\mu}(f)\geq -\dfrac{1}{2}\mu \sqrt{\dfrac{2}{3|1 - \lambda| + 2}}.\]

To show the sharpness, we consider the function $f_9$ defined by (\ref{e1}) with
\[
p(z) = \frac{1 + (t_1 t_2 + t_1)z + t_2 z^2}{1 + (t_1 t_2 - t_1)z - t_2 z^2},
\]
where
\[
t_1 = \sqrt{\frac{2|L|}{M + 2|L|}}\; \mbox{and}\; t_2 = -\frac{|L|(4K + 2L)}{L|4K + 2L|}.
\]
Then, $q_1 = 2t_1$ and $ q_2 = 2t_2^2 + 2(1 - t_1^2)t_2,$
and it gives
\[
|Kq_1^2 + Lq_2| = \frac{|L|(4K + 2L) - |L|(4K + 2L)}{|2K + L| + L} = 0.
\]
Thus,
\[
\Phi(q_1, q_2) = |Kq_1^2 + Lq_2| - |Jq_1| = -2Jt_1 = -2J \frac{2|L|}{M + 2|L|}.
\]
Therefore, from $(\ref{gfe1})$, we have 
\[F_{\lambda,\mu}(f)= -\dfrac{1}{2}\mu \sqrt{\dfrac{2}{3|1 - \lambda| + 2}}.\]

Furthermore, when $\frac{9\mu^2-4}{6} > |1 - \lambda| > \frac{3\mu-2}{3}$, since $J^2 \not\leq 2|L|(M + 2|L|)$ and $J \not\geq M + 2|L|$ hold, application of Lemma \ref{Ls} yields that
\begin{align*}
	 -\Phi(p_1,p_2) \leq \frac{36\mu^2 + 24|1 - \lambda| + 16}{3|1 - \lambda| + 2}.
\end{align*}

Hence, from $(\ref{gfe1})$, we obtain

\[ F_{\lambda,\mu}(f)\geq -\dfrac{9\mu^2 + 6|1 - \lambda| + 4}{12(3|1 - \lambda| + 2)}.\]

To show sharpness of above inequality, we consider the function $f_{10}$ defined by (\ref{e1}) with
\[
p(z) = \frac{1 + (t_1 t_2 + t_1)z + t_2 z^2}{1 + (t_1 t_2 - t_1)z - t_2 z^2},
\]
where
\[
t_1 = \frac{J}{M + 2|L|}, \qquad t_2 = -\frac{|L|(4K + 2L)}{L|4K + 2L|}.
\]
Then, $q_1 = 2t_1$ and $ q_2 = 2t_2^2 + 2(1 - t_1^2)t_2,$ and we have
\begin{align*}
|Kq_1^2 + Lq_2|^2 &= |(4K + 2L)t_1^2 + 2L(1 - t_1^2)t_2|^2\\
&= M^2 t_1^4 + 4\,\mathrm{Re}\big(L\overline{(4K + 2L)}t_1^2(1 - t_1^2)t_2\big)+ 4L^2(1 - t_1^2)^2\\
&= M^2 t_1^4 - 4t_1^2(1 - t_1^2)M|L| + 4L^2(1 - t_1^2)^2\\
&= \left(Mt_1^2 - 2|L|(1 - t_1^2)\right)^2\\
&= \left(\frac{J^2}{M + 2|L|} - 2|L|\right)^2.
\end{align*}

Because $J^2 > 2|L|(M + 2|L|)$, it follows that
\[
|Kq_1^2 + Lq_2| = \frac{J^2}{M + 2|L|} - 2|L|,
\]
which implies
\begin{align}\label{hhh2}
\Phi(q_1, q_2) = |Kq_1^2 + Lq_2| - |Jq_1| 
= -\left( 2|L| + \frac{J^2}{M + 2|L|} \right).
\end{align}

Thus, from (\ref{gfe1}) and  (\ref{hhh2}), we have 
\[F_{\lambda, \mu}(f) =-\dfrac{9\mu^2 + 6|1 - \lambda| + 4}{12(3|1 - \lambda| + 2)}.\]

In view of the above discussion, we conclude that
\begin{align*}
	F_{\lambda,\mu}(f)\geq 
	\begin{cases} -\dfrac{2\mu-|1-\lambda|}{4},\quad\quad\quad\quad\;\;\text{if}\;\dfrac{3\mu-2}{3}\geq |1 - \lambda|\vspace{2mm}\\
		-\dfrac{1}{2}\mu \sqrt{\dfrac{2}{3|1 - \lambda| + 2}},\quad\quad\;\;\text{if}\;\dfrac{9\mu^2-4}{6}\leq |1 - \lambda|\vspace{2mm}\\
		-\dfrac{9\mu^2 + 6|1 - \lambda| + 4}{12(3|1 - \lambda| + 2)},\quad\quad \;\text{if}\;\dfrac{9\mu^2-4}{6}> |1 - \lambda|>|\dfrac{3\mu-2}{3}.
	\end{cases}
\end{align*}
This completes the proof.
\end{proof}

\vspace{0.2in}

\noindent \textbf {Conflict of interest:} The author declares that there is no conflict of interest regarding the publication of this paper.\vspace{1.2mm}

\noindent{\bf Funding:} There is no funding received from any organizations for this research work.\vspace{1.2mm}

\noindent \textbf {Data availability statement:}  Data sharing is not applicable to this article as no database were generated or analyzed during the current study.
\vspace{1.2mm}

\noindent  {\bf Author contributions.} All authors contributed equally to the conceptualization, investigation, and writing of this manuscript. 
All authors have read and approved the final version of the paper.

\end{document}